\newtheorem{thm}{Theorem}[section]
\newtheorem{cor}[thm]{Corollary}
\newtheorem{lem}[thm]{Lemma}
\newtheorem{prop}[thm]{Proposition}
\theoremstyle{definition}
\newtheorem{ex}[thm]{Example}
\newtheorem*{Notation}{Notation}
\newtheorem{rem}[thm]{Remark}
\newcommand{\ZZ}{\mathbb{Z}}
\newcommand{\msum}{\mathsf{msum}}
\newcommand{\disc}{\mathsf{disc}}
\begin{document}
\title[$k$-consecutive sums]{Permutations with small maximal $k$-consecutive sums}
\author[A Higashitani \and K. Kurimoto]{Akihiro Higashitani \and Kazuki Kurimoto} 
\address[A.\,Higashitani]{Department of Pure and Applied Mathematics, 
Graduate School of Information Science and Technology, Osaka University, Osaka, Japan, 565-0871} 
\email{higashitani@ist.osaka-u.ac.jp}
\address[K. Kurimoto]{Department of Mathematics, Kyoto Sangyo University, Motoyama, Kamigamo, Kita-Ku, Kyoto, Japan, 603-8555}
\email{i1885045@cc.kyoto-su.ac.jp}
\thanks{
{\bf 2010 Mathematics Subject Classification:} Primary 40B99; Secondary 05A05. \\
\;\;\;\; {\bf Keywords:} Permutations, $k$-consective sums. }

\maketitle

\begin{abstract} 
Let $n$ and $k$ be positive integers with $n>k$. 
Given a permutation $(\pi_1,\ldots,\pi_n)$ of integers $1,\ldots,n$, we consider $k$-consecutive sums of $\pi$, i.e., 
$s_i:=\sum_{j=0}^{k-1}\pi_{i+j}$ for $i=1,\ldots,n$, where we let $\pi_{n+j}=\pi_j$. 
What we want to do in this paper is to know the exact value of
$$\msum(n,k):=\min\left\{\max\{s_i : i=1,\ldots,n\} -\frac{k(n+1)}{2}: \pi \in S_n\right\},$$ 
where $S_n$ denotes the set of all permutations of $1,\ldots,n$. 
In this paper, we determine the exact values of $\msum(n,k)$ for some particular cases of $n$ and $k$. 
As a corollary of the results, we obtain $\msum(n,3)$, $\msum(n,4)$ and $\msum(n,6)$ for any $n$. 
\end{abstract}

\bigskip

\section{Introduction}

%
%

Let us fix some notation used throughout this paper. 
Let $n$ and $k$ be positive integers with $n>k$. Let $S_n$ denote the set of permutations with $n$ elements. 
Given any $\pi =(\pi_1,\ldots,\pi_n) \in S_n$, we always let $\pi_{n+i}=\pi_i$ for $i \geq 1$. 
We consider $k$-consecutive sums in $\pi$ starting from $\pi_i$, denoted by $s_i$, i.e., let $s_i=\sum_{j=0}^{k-1} \pi_{i+j}$. 
Note that the average of $s_1,\ldots,s_n$ is $k(n+1)/2$ since $\sum_{i=1}^ns_i=k\sum_{\ell=1}^n\ell$. Let 
$$\msum(\pi,k):=\max\{s_i : i=1,\ldots,n\} - \frac{k(n+1)}{2}.$$ 
The main object in this paper is 
$$\msum(n,k):=\min\{\msum(\pi,k) : \pi \in S_n\}. $$
Similarly, we also define \begin{align*}
\disc(\pi,k):=\max\left\{\left|s_i- \frac{k(n+1)}{2}\right| : i=1,\ldots,n\right\} \text{ and } \disc(n,k):=\min\{\disc(\pi,k) : \pi \in S_n\}. 
\end{align*}
Clearly, $\msum(n,k) \leq \disc(n,k)$. Remark that $\msum(n,k)$ does not necessarily coincide with $\disc(n,k)$ (see Remark \ref{rima-ku}). 

Moreover, by the definition of $\msum(n,k)$ and $\disc(n,k)$, we see that $\msum(n,k)>0$ and $\disc(n,k)>0$ for any $n$ and $k$. 
More precisely, we have 
\begin{equation}\label{chuui}\begin{split}
\msum(n,k) \text{ (and $\disc(n,k)$) }=\begin{cases}
\text{(odd positive integer)}/2, \;\;&\text{if $k$ is odd and $n$ is even}, \\
\text{positive integer}, &\text{otherwise}. \quad \end{cases}
\end{split}\end{equation}
In particular, we have $\msum(n,k) \geq 1/2$ if $k$ is odd and $n$ is even, and $\msum(n,k) \geq 1$ otherwise. 
Remark that the same inequalities hold for $\disc(n,k)$.

Furthermore, we see that $\msum(n,k)=\msum(n,n-k)$ for any $n$ and $k$ with $n > k$ (see Proposition \ref{equal}). 
Thus, for the investigation of $\msum(n,k)$, we may assume that $n \geq 2k$. 

\bigskip

The goal of the present paper is the computation of the exact values of $\msum(n,k)$ for small $k$'s (Corollary \ref{meidai}). 
For this goal, we give the exact values or some non-trivial lower bounds of $\msum(n,k)$ for some particular cases of $n$ and $k$ 
(see Theorems \ref{main1}, \ref{main2}, \ref{main3}, Proposition \ref{main4} and Corollary \ref{kei}). 

\bigskip


Let us explain the history of the computations of $\msum(n,k)$ and $\disc(n,k)$. 

Although it is unclear which literature treated $\msum(n,k)$ or $\disc(n,k)$ at first, 
the textbook of discrete mathematics by Liu \cite{Liu} mentions the computation of $\msum(36,3)$ as an exercise (as far as the authors know). 
Morris \cite{Mor} and Stefanovi\'c \cite{S} determined the exact values of $\msum(n,k)$ for some small $n$ and $k$. 
By Anstee--Ferguson--Griggs \cite{AFG}, the exact values of $\disc(n,k)$ were investigated in many cases of $n$ and $k$. 
In \cite{AFG}, $\disc(n,3)$ was determined for almost every $n$ as a corollary of the results obtained there. 
The main interest in \cite{AFG} was $\disc(n,k)$. After the paper \cite{AFG}, 
Stefanovi\'c and \u{Z}ivkovi\'c \cite{MM} studied $\disc(n,k)$ and $\msum(n,k)$ for some other cases of $n$ and $k$. 
By combining the results in \cite{AFG} and \cite{MM}, we can know the exact values of $\disc(n,3)$ for every $n$. 
In \cite{MM}, they also supplied the exact values of $\msum(n,k)$ for some particular cases. For more detail, see Section \ref{preliminary}. 
We collect the results of \cite{AFG} (resp. \cite{MM}) in Theorem \ref{thm:AFG} (resp. Theorem \ref{thm:MM}) 
and provide Corollary \ref{previous} which immediately follows from Theorems \ref{thm:AFG} and \ref{thm:MM}. 

A kind of generalizations of $\msum(n,k)$ is to treat a ``$2$-dimensional analogue'' of this problem. 
More precisely, Kawamura \cite{K} studies the difference of the maximum and the minimum of the sums of 
the integers in $k \times \ell$ regions in the arrangements of $1,\ldots,mn$ in $m \times n$ square board. 


\bigskip


As further contributions, we compute $\msum(n,k)$ for some more cases of $n$ and $k$. The main results of this paper are as follows: 
\begin{thm}\label{main1}
{\em (a)} Let $k$ be even. If $n \equiv \pm 1$ (mod $k$), then $\msum(n,k)=k/2$. \\
{\em (b)} Let $k$ be odd. 
\begin{itemize}
\item If $n \equiv \pm 1$ (mod $k$) and 
\begin{itemize}
\item $n$ is even, then $\msum(n,k)=k/2$; 
\item $n$ is odd, then $\msum(n,k)=(k+1)/2$. 
\end{itemize}
\item If $n \equiv 0$ (mod $k$) and 
\begin{itemize}
\item $n$ is even, then $\msum(n,k)=3/2$ when $n \geq 4k$ and $\msum(n,k)=1/2$ when $n = 2k$; 
\item $n$ is odd and $n \geq k(2k+3)$, then $\msum(n,k)=2$. 
\end{itemize}
\end{itemize}
\end{thm}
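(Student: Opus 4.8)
The plan is to prove each asserted value by establishing a matching lower and upper bound. By Proposition~\ref{equal} and the parity information in \eqref{chuui} I may assume $n \geq 2k$ throughout, and I will repeatedly use that $\msum(n,k)$ is a positive integer except when $k$ is odd and $n$ is even, in which case it is an odd positive integer divided by $2$. Write $A := k(n+1)/2$ for the common average of the window sums $s_1,\dots,s_n$, so that $\sum_{i=1}^n s_i = nA$, and in the cases $n \equiv 0,\pm 1 \pmod k$ write $n = qk+\varepsilon$ with $\varepsilon \in \{-1,0,1\}$. The lower bounds will come from choosing a good cyclic ``anchor'' and summing a family of length-$k$ blocks; the upper bounds from exhibiting explicit near-balanced permutations.

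For the lower bounds when $n \equiv \pm 1 \pmod k$ I would argue as follows. When $n = qk+1$, the $q$ length-$k$ blocks starting just after the position carrying the value $1$ are pairwise disjoint and cover every value except $1$, so their sum is $n(n+1)/2 - 1$; hence $\max_i s_i \geq (n(n+1)/2 - 1)/q$, which a direct computation shows equals exactly $A + k/2$. When $n = qk-1$, the $q$ length-$k$ blocks anchored so that the unique doubly covered position carries the value $n$ have total $n(n+1)/2 + n$, giving $\max_i s_i \geq A + k/2 - 1/q$. Since $n \geq 2k$ forces $q \geq 3$, we have $1/q < 1/2$, and feeding these inequalities into \eqref{chuui} yields the precise claimed value: $A + k/2$ is an integer when $k$ is even or when $k$ is odd and $n$ is even, so $\msum \geq k/2$; whereas when $k$ and $n$ are both odd, $A + k/2$ is a half-integer while each $s_i$ is an integer, which bumps the bound up to $(k+1)/2$.

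The genuinely hard cases are $n \equiv 0 \pmod k$, where $n = qk$ and every partition anchor produces $q$ disjoint blocks summing to exactly $n(n+1)/2 = qA$; thus the naive block-counting is vacuous and forces no excess at all. For $n = 2k$ ($q=2$) the complementarity $s_i + s_{i+k} = 2A$ pairs the windows, so the lower bound $\msum \geq 1/2$ is immediate from \eqref{chuui} and a matching permutation with all $s_i \in \{A \pm 1/2\}$ is easy to write down. For $n \geq 4k$ with $n$ even and for $n \geq k(2k+3)$ with $n$ odd the plan is to argue by contradiction: assume $\msum$ attains its minimal admissible value ($1/2$, resp.\ $1$), so every window lies within $1/2$ (resp.\ $1$) of $A$. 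Each of the $k$ residue classes of starting indices is a partition summing to $qA$, which forces the windows in each class to split almost evenly between the two extreme admissible values; I would then exploit the exact recursion $s_{i+1} - s_i = \pi_{i+k} - \pi_i$ together with a pigeonhole argument on the cyclic positions of the largest values to locate a length-$k$ window whose sum exceeds the assumed bound, contradicting minimality. I expect this to be the main obstacle: the thresholds $n \geq 4k$ and $n \geq k(2k+3)$ should be precisely what make the counting go through, and pinning down the odd case, where the room required grows quadratically in $k$, will be the most delicate point.

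Finally, for all the upper bounds I would produce explicit permutations. The uniform idea is to distribute the values $\{1,\dots,n\}$ so that each length-$k$ window receives, as evenly as possible, one representative from each of $k$ equal-size bands of consecutive integers; a short computation then checks that the maximal window sum equals $A$ plus the claimed value. These constructions are routine once the correct interleaving pattern, dictated by $n \bmod k$ and the parities of $n$ and $k$, is identified, so the weight of the argument rests on the $n \equiv 0 \pmod k$ lower bounds described above.
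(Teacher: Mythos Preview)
Your plan for $n \equiv \pm 1 \pmod k$ is correct and essentially coincides with the paper's Lemmas~\ref{hodai:mk+1} and~\ref{hodai:mk-1}; your averaging over $q$ blocks anchored at the extreme value is exactly the paper's argument for $n=qk+1$, and your variant for $n=qk-1$ (double-covering the maximum to get $\msum \ge k/2 - 1/q$ and then rounding via \eqref{chuui}) is a legitimate and slightly cleaner alternative to the paper's contradiction argument.

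The genuine gap is in the $n \equiv 0 \pmod k$ lower bounds. You assert that if $\msum$ equals its minimal admissible value ($1/2$, resp.\ $1$) then ``every window lies within $1/2$ (resp.\ $1$) of $A$''. This is false: by definition $\msum(\pi,k)=\max_i s_i - A$ bounds only the maximum window, not the minimum, so small $\msum$ does \emph{not} confine the $s_i$ to a short interval around $A$ (that would be small $\disc$). Hence your claim that the $q$ windows in each residue class ``split almost evenly between the two extreme admissible values'' does not follow, and the pigeonhole sketch built on it has no traction. The paper's proofs never use a two-sided bound. For $m$ even the paper proves $\msum(mk,k)\ge 1-1/m$ (Lemma~\ref{hodai1}) by comparing, for each of the $m$ blocks, the window starting at its first entry with the window shifted by one, summing, and invoking an elementary inequality (Lemma~\ref{max}(i)) for $\sum_i \max\{a_i,a_{i+1}\}$ over distinct integers; together with \eqref{chuui} and $\disc(mk,k)\le 2$ from Theorem~\ref{thm:AFG}(iv) this forces $\msum=3/2$. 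For $m$ odd with $m\ge 2k+3$ the argument is substantially more intricate than anything in your sketch: from $\sum_i|s_{ik+j}-s_{ik+j+1}|=\sum_i|\pi_{ik+j}-\pi_{(i+1)k+j}|\ge 2(m-1)+2(\alpha_j-1)$ (Lemma~\ref{max}(ii)) one extracts a \emph{lower} bound on the number $R$ of indices with $s_i=A+1$, while a separate run-counting lemma (Lemma~\ref{technical}) applied to the sign pattern of $(s_{i+1}-s_i)_i$ gives an \emph{upper} bound $2R\le (k-1)m+\min_j\alpha_j$; these are incompatible precisely once $m\ge 2k+3$, which is where the quadratic threshold $n\ge k(2k+3)$ comes from. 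A direct pigeonhole on the positions of the largest values does not produce this.
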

\begin{thm}\label{main2}
Let $n$ and $k$ be even. Then $\msum(n,k)=1$. 
\end{thm}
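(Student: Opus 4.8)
The plan is to prove the two inequalities $\msum(n,k) \ge 1$ and $\msum(n,k) \le 1$ separately. The lower bound is immediate from \eqref{chuui}: since $k$ is even we are in the ``otherwise'' case, so $\msum(n,k)$ is a positive integer and hence $\msum(n,k) \ge 1$. Thus the whole content is to exhibit a single $\pi \in S_n$ with $\msum(\pi,k) \le 1$, i.e.\ with all $k$-consecutive sums at most $\tfrac{k(n+1)}{2}+1$.

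For the construction I would first pass to centered coordinates. Writing $v_i = \pi_i - \tfrac{n+1}{2}$, the $v_i$ range over the half-integers $\pm\tfrac12,\pm\tfrac32,\ldots,\pm\tfrac{n-1}{2}$, and $s_i - \tfrac{k(n+1)}{2} = \sum_{j=0}^{k-1} v_{i+j}$, which is an integer because $k$ is even. So it suffices to arrange these half-integers on a cycle of length $n$ so that every window of $k$ consecutive terms sums to at most $1$. Set $m = n/2$ and $d = k/2$. The idea is to place the positive values at the odd positions and the negative values at the even positions in ``antipodal'' fashion: put $a_t$ at position $2t-1$ and $-a_t$ at position $2t$ for $t = 1,\ldots,m$, where $(a_1,\ldots,a_m)$ is a permutation of $\{\tfrac12,\tfrac32,\ldots,\tfrac{n-1}{2}\}$ to be chosen. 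A short computation then shows that a window starting at an odd position covers $a_s,\ldots,a_{s+d-1}$ together with $-a_s,\ldots,-a_{s+d-1}$ and so sums to $0$, while a window starting at an even position sums to $a_{s+d}-a_s$. Hence everything reduces to choosing the $a_t$ so that $a_{t+d}-a_t \le 1$ for all $t$ (indices read cyclically mod $m$).

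Writing $a_t = c_t - \tfrac12$ with $(c_t)$ a permutation of $\{1,\ldots,m\}$, the requirement becomes $c_{t+d} \le c_t + 1$ for all $t \pmod m$. To build such a $c$, I would analyze the map $t \mapsto t+d$ on $\ZZ/m$: with $g = \gcd(m,d)$ it decomposes $\ZZ/m$ into $g$ orbits, each of length $L = m/g$, and each orbit is a single cycle under $+d$. On the $r$-th orbit ($r = 0,\ldots,g-1$) I would assign the block of consecutive values $\{rL+1,\ldots,rL+L\}$, placing them in increasing order along the cycle. Since $s$ and $s+d$ always lie in the same orbit, within each orbit the value increases by exactly $1$ at every step except once (where it drops from the top of the block to the bottom), so indeed $c_{t+d} \le c_t + 1$ everywhere. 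Translating back gives a $\pi \in S_n$ all of whose $k$-consecutive sums are at most $\tfrac{k(n+1)}{2}+1$, with the value $1$ attained at the even-start windows, so $\msum(\pi,k)=1$.

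The main obstacle is the case $\gcd(m,d) > 1$: when $d$ and $m$ share a factor, the single-cycle argument that forces $a_{t+d}-a_t \le 1$ around one cycle no longer sees all of $\{1,\ldots,m\}$ at once, and one must check that the independent orbit-blocks can be fitted together into one genuine permutation of $\{1,\ldots,m\}$ while respecting the constraint across orbit boundaries. The orbit/coset bookkeeping above is exactly what resolves this, and the coprime case $\gcd(m,d)=1$ falls out as the special instance with a single orbit.
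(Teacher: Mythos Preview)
Your proof is correct. The paper's argument and yours share the same underlying idea: arrange $\pi$ so that $\pi_{2t-1}+\pi_{2t}=n+1$ for every $t$, which forces every $k$-window starting at an odd position to sum to exactly $k(n+1)/2$, and then control the windows starting at even positions, whose excess over the mean is precisely $\pi_{2(s+d)-1}-\pi_{2s-1}$ (your $a_{s+d}-a_s$). Where the two proofs diverge is in how this last step is handled. The paper writes down an explicit box-filling recipe that depends on the division $n=qk+r$, and then verifies $s_i\le k(n+1)/2+1$ at the even positions by a case analysis on the row/column indices. You instead abstract the residual task to ``find a permutation $c$ of $\{1,\dots,m\}$ with $c_{t+d}\le c_t+1$ cyclically'' and dispatch it in one line by decomposing $\ZZ/m\ZZ$ into its $g=\gcd(m,d)$ orbits under $t\mapsto t+d$ and assigning a block of $L=m/g$ consecutive values to each orbit in cyclic order. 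Your route is shorter and makes the structure transparent; the paper's has the virtue of giving a fully explicit $\pi$ without any coset bookkeeping.

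One small remark on your closing paragraph: you speak of a ``constraint across orbit boundaries'', but in fact there is none, since $t$ and $t+d$ always lie in the same orbit. The only thing that actually needs checking in the multi-orbit case is that the value-blocks $\{rL+1,\dots,(r+1)L\}$ for $r=0,\dots,g-1$ partition $\{1,\dots,m\}$, which is immediate.
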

\begin{thm}\label{main3}
Let $k$ be odd with $k \geq 5$. Assume that $n \equiv (k+1)/2$ (mod $k$). Then $\msum(n,k) >1$. 
\end{thm}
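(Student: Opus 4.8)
The plan is to establish the stronger explicit lower bound $\msum(n,k)\ge \frac{k(n-1)}{2(2n-1)}$, which exceeds $1$ exactly in the stated range. First I would recenter the problem: writing $\delta_i:=\pi_i-\frac{n+1}{2}$ (indices mod $n$), the defining property of a permutation gives $\sum_{i=1}^n\delta_i=0$, and the shifted window sums $\sigma_i:=s_i-\frac{k(n+1)}{2}=\sum_{j=0}^{k-1}\delta_{i+j}$ satisfy $\msum(\pi,k)=\max_i\sigma_i=:M$. The congruence $n\equiv(k+1)/2\pmod k$ is equivalent to the exact divisibility $2n-1=(2m+1)k$ for some integer $m\ge 1$ (and it forces $\gcd(n,k)=1$); this is precisely the structural fact I would exploit.

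The key step is a covering identity. For any starting index $q$, the $2m+1$ consecutive windows beginning at $q,q+k,\dots,q+2mk$ occupy the block of $(2m+1)k=2n-1$ consecutive positions $q,q+1,\dots,q+2n-2$; reduced modulo $n$ this block of length $2n-1$ covers every position twice except the single position $q-1$, which is covered once. Hence $\sum_{j=0}^{2m}\sigma_{q+jk}=2\sum_{i=1}^n\delta_i-\delta_{q-1}=-\delta_{q-1}$. Choosing $q$ so that $\pi_{q-1}=1$ makes the right-hand side equal to $\frac{n+1}{2}-1=\frac{n-1}{2}$, which is its largest possible value. Since each of the $2m+1$ summands on the left is at most $M$, I obtain $(2m+1)M\ge\frac{n-1}{2}$, that is $M\ge\frac{n-1}{2(2m+1)}=\frac{k(n-1)}{2(2n-1)}$.

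The final step is elementary: $\frac{k(n-1)}{2(2n-1)}>1$ is equivalent to $n(k-4)>k-2$, and since $k\ge 5$ while the smallest admissible $n$ (with $n>k$ in the congruence class) is $(3k+1)/2$, one checks $n\ge\frac{3k+1}{2}>\frac{k-2}{k-4}$, so the bound holds for every $n$ in the class. As this is valid for all $\pi\in S_n$, it yields $\msum(n,k)>1$. I expect the only genuine obstacle to be discovering and justifying the covering identity, in particular the bookkeeping that exactly one residue is covered once while the remaining $n-1$ are covered twice; after that, the optimization (placing the smallest value $1$ at the uncovered seam) and the concluding inequality are routine. Note that $k\ge 5$ is used only in the last inequality, and the argument never invokes the parity refinement \eqref{chuui}, so no case split on the parity of $n$ is required.
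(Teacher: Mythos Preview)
Your argument is correct. The covering identity $\sum_{j=0}^{2m}\sigma_{q+jk}=-\delta_{q-1}$ follows exactly as you describe from $(2m+1)k=2n-1$, and choosing $\pi_{q-1}=1$ maximizes the right-hand side to $\tfrac{n-1}{2}$; the final inequality $n(k-4)>k-2$ is easily checked for $k\ge 5$ and $n\ge (3k+1)/2$.

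The paper's proof rests on the same arithmetic fact $(2m+1)k=2n-1$, but organizes the windows differently: it fixes $\pi_{a+1}=1$ (where $k=2a+1$), bounds the $m+1$ sums $s_{1},s_{1+k},\ldots,s_{1+mk}$ from above to control the overlap $\sum_{i=1}^{a}\pi_i$, and then bounds the $m$ sums $s_{a+2},\ldots,s_{a+2+(m-1)k}$ from below, reaching a contradiction with the hypothesis $\msum\le 1$. Summing those two families is exactly your single family of $2m+1$ windows, so the combinatorics coincide; the difference is that you extract the explicit quantitative bound $\msum(n,k)\ge \tfrac{k(n-1)}{2(2n-1)}$ directly, whereas the paper runs a contradiction argument calibrated to the threshold $1$. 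Your packaging is cleaner and gives slightly more information, at the cost of the small extra verification that the explicit bound actually exceeds $1$ for all admissible $n$.
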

\begin{prop}\label{main4}
Assume that $n \equiv 2$ (mod $5$). Then $\msum(n,5) > 1$. 
\end{prop}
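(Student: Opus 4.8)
The plan is to argue by contradiction. Suppose $\msum(n,5)\le 1$, write $n=5m+2$, and set $\bar s=\frac{5(n+1)}{2}$ for the common average of the cyclic sums $s_1,\dots,s_n$. Using the reduction to $n\ge 2k$, I may assume $m\ge 2$ (the small case $n=7$ is genuinely exceptional). By \eqref{chuui}, when $n$ is even $\msum(n,5)$ is a half-odd-integer, so the hypothesis forces $s_i\le \bar s+\tfrac12$ for all $i$; when $n$ is odd it forces $s_i\le \bar s+1$. The one structural fact I will use constantly is that consecutive sums never agree:
\[
s_{i+1}-s_i=\pi_{i+5}-\pi_i\neq 0,
\]
since $\pi$ is injective and $5\not\equiv 0\pmod n$.

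First I would dispose of the case $n$ even. Write $s_i=(\bar s+\tfrac12)-\varepsilon_i$ with $\varepsilon_i\in\ZZ_{\ge 0}$; then $\sum_i s_i=n\bar s$ gives $\sum_i\varepsilon_i=n/2$. Because no two consecutive $s_i$ coincide, the windows attaining the maximum value $\bar s+\tfrac12$ (those with $\varepsilon_i=0$) form an independent set in the cycle $C_n$ and so number at most $n/2$. Hence at least $n/2$ windows have $\varepsilon_i\ge 1$, and as these $\varepsilon_i$ already sum to $n/2$ there are exactly $n/2$ of them, each equal to $1$. Therefore the $s_i$ alternate between $\bar s+\tfrac12$ and $\bar s-\tfrac12$ around the cycle, whence $|s_{i+1}-s_i|=1$, i.e. $|\pi_{i+5}-\pi_i|=1$ for every $i$. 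Since $\gcd(5,n)=1$, the shift $i\mapsto i+5$ is a single $n$-cycle, so reading off the entries of $\pi$ along it yields a cyclic arrangement of $1,\dots,n$ in which consecutive terms differ by exactly $1$. This is impossible: the term equal to $n$ would be forced to have both of its cyclic neighbours equal to $n-1$. This contradiction gives $\msum(n,5)>1$ for even $n$.

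The main obstacle is the case $n$ odd, where the same counting is looser. Writing $s_i=(\bar s+1)-\varepsilon_i$ now yields $\sum_i\varepsilon_i=n$ rather than $n/2$, so the maximal windows need not interleave perfectly with windows of deficit exactly $1$, and I cannot immediately force $|\pi_{i+5}-\pi_i|=1$. To restore enough rigidity I would bring in a disjoint-covering estimate: covering all positions except one adjacent pair $\{p,p+1\}$ by $m$ pairwise disjoint $5$-windows and summing the bound $s_i\le\bar s+1$ gives
\[
\tfrac{n(n+1)}{2}-(\pi_p+\pi_{p+1})\le m(\bar s+1),
\]
and since $\tfrac{n(n+1)}{2}-m\bar s=5m+3$ this reads $\pi_p+\pi_{p+1}\ge 4m+3$ for every $p$. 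Equivalently the small values $1,\dots,2m+1$ are pairwise non-adjacent in the cyclic order, because any two of them sum to at most $4m+1$.

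I would then feed this isolation of the small values back into the deficit count. Each small value sits at the centre of an unavoidably light window, contributing forced deficit, and the aim is to show that these forced deficits, together with the no-two-consecutive-maxima constraint, either push $\sum_i\varepsilon_i$ strictly above $n$ or else re-impose the alternating level pattern of the even case — in both cases producing the $\pm1$ step structure $|\pi_{i+5}-\pi_i|=1$ and hence the same contradiction through $\gcd(5,n)=1$. Making this quantitative and uniform in $m$ is the delicate heart of the argument, and is why the odd residue class requires noticeably more care than the clean even case.
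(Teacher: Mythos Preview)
Your even-$n$ argument is complete and correct, and it differs from the paper's. The paper fixes $\pi_1=1$ and, via a disjoint covering by $2m$ windows, bounds $\pi_i+\pi_{i+5j+1}$ from below for \emph{all} $j$; setting $i=1$ then forces $2m+1$ distinct entries into a set of size $m+1$, an immediate pigeonhole contradiction. Your route is more structural: you show the deficits $\varepsilon_i$ must be a $0/1$ sequence with exactly $n/2$ zeros forming a maximum independent set in $C_n$, hence alternating, which gives $|\pi_{i+5}-\pi_i|=1$ for all $i$; since $\gcd(5,n)=1$ this makes $1,\dots,n$ into a cyclic sequence with unit steps, impossible at the value $n$. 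Both are short; yours has the pleasant feature of never fixing a base point, while the paper's has the advantage that its key inequality generalises directly to the odd case.

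Your odd-$n$ case, however, is not a proof but a plan, and the plan as stated does not close. You derive only the $j=0$ instance $\pi_p+\pi_{p+1}\ge 4m+3$ of the covering bound, deduce that the $2m+1$ smallest values are pairwise non-adjacent, and then hope to ``feed this back into the deficit count'' to force either $\sum_i\varepsilon_i>n$ or the alternating pattern. But with the weaker budget $\sum_i\varepsilon_i=n$ there is a lot of slack: maximal windows need not alternate, and knowing only that small values are isolated does not by itself produce enough forced deficit to overshoot $n$. The paper's proof for $n=10m+7$ shows what is actually needed: one uses the \emph{full family} of bounds $\pi_i+\pi_{i+5j+1}\ge 8m+7$ (all $j$), which upon setting $\pi_1=1$ pins down $\{\pi_{5j+2}:0\le j\le 2m+1\}=\{8m+6,\dots,10m+7\}$ exactly. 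From there one extracts a further inequality $\pi_2+\pi_{10m+7}\le 17m+13$, splits on the sign pattern of the differences $\pi_{5j+7}-\pi_{5j+2}$, and in each branch forces a monotone chain $\pi_2\le\pi_7\le\cdots\le\pi_{10m+7}$ of length $2m+2$ inside a range of length $m+2$, a contradiction. None of these steps is visible from your $j=0$ bound alone, so as written the odd case has a genuine gap.
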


A proof of Theorem \ref{main1} consists of three parts: 
\begin{enumerate}
\item the case $n \equiv 1$ (mod $k$) (see Section \ref{n=1}); 
\item the case $n \equiv -1$ (mod $k$) (see Section \ref{n=-1}); 
\item the case $n \equiv 0$ (mod $k$) (see Section \ref{n=0}). 
\end{enumerate}
A proof of Theorem \ref{main2} is given in Section \ref{nkeven}. 
Proofs of Theorem \ref{main3} and Proposition \ref{main4} are given in Section \ref{k=5}. 

\begin{cor}\label{kei}
Let $k$ be even. Assume that $n \equiv k/2$ (mod $k$). Then we have $\msum(n,k)=1$ and we also have $\disc(n,k)=1$ if $n$ is odd. 
\end{cor}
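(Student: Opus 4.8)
The lower bounds are free: since $k$ is even, the case distinction in \eqref{chuui} puts us in the ``positive integer'' regime, so $\msum(n,k)\ge 1$ and $\disc(n,k)\ge 1$, and both quantities are integers. Hence it suffices to produce matching upper bounds. The first thing I would record is a parity dichotomy: the hypothesis $n\equiv k/2\pmod k$ together with $k$ even forces $n\equiv k/2\pmod 2$, so $n$ is even precisely when $k\equiv 0\pmod 4$ and odd precisely when $k\equiv 2\pmod 4$. This is what separates the two assertions of the corollary.

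If $n$ is even (equivalently $k\equiv 0\pmod 4$), then $n$ and $k$ are both even and Theorem \ref{main2} gives $\msum(n,k)=1$ at once; in this branch no statement about $\disc$ is claimed (and indeed one does not expect $\disc(n,k)=1$ here).

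If $n$ is odd (equivalently $k\equiv 2\pmod 4$), then $h:=k/2$ is odd and $n=(2m+1)h$ for some $m\ge 1$, and I claim it is enough to exhibit a single permutation $\pi$ with $\disc(\pi,k)\le 1$: combined with $\msum(n,k)\le\disc(n,k)$ and the lower bounds above this collapses $1\le\msum(n,k)\le\disc(n,k)\le 1$, proving both equalities simultaneously. To set up the construction I would pass to centered values $c_i:=\pi_i-\tfrac{n+1}{2}\in\{-t,\dots,t\}$ with $t=\tfrac{n-1}{2}$; writing $\sigma_i:=s_i-\tfrac{k(n+1)}{2}=\sum_{j=0}^{k-1}c_{i+j}$, the problem becomes purely combinatorial: arrange $-t,\dots,t$ on a cycle so that every window of $k=2h$ consecutive entries satisfies $\sigma_i\in\{-1,0,1\}$.

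The heart of the proof, and the step I expect to be the main obstacle, is this explicit balanced arrangement. The plan is to interleave the complementary pairs $\{j,-j\}$ with the central entry $0$ so that within each window the large positive and negative entries nearly cancel, and then to verify the window constraint inductively via the telescoping identity $\sigma_{i+1}-\sigma_i=c_{i+k}-c_i$, so that bounding the jumps $c_{i+2h}-c_i$ keeps the running window sum trapped in $\{-1,0,1\}$; the divisibility $n=(2m+1)h$ is exactly what makes an almost-periodic sign pattern compatible with using each value exactly once. The small cases $k=2$ (e.g. $\pi=(1,4,3,2,5)$ for $n=5$) and $k=6$ (e.g. the centered cycle $0,-3,4,-1,-2,2,1,-4,3$ for $n=9$) already show the balancing is tight---a window sum identically $0$ is impossible, since it would force $c_{i+2h}=c_i$ for all $i$ and hence make $c$ periodic of period $h<n$---so the value $1$ really is optimal. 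Finally I would note the symmetry $\disc(\pi,k)=\disc(\pi,n-k)$, immediate from the fact that a $k$-window and its complementary $(n-k)$-window add up to the constant $\tfrac{n(n+1)}{2}$; this reduces the smallest case $n=3k/2$ to windows of size $h$, though for $m\ge 2$ the arrangement must be built directly.
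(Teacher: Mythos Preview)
Your lower bounds and the even-$n$ branch match the paper exactly: \eqref{chuui} gives $\msum(n,k)\ge 1$ and $\disc(n,k)\ge 1$, and Theorem~\ref{main2} handles $\msum(n,k)=1$ when $n$ is even. Your parity dichotomy is also correct.

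The odd-$n$ branch, however, is where your proposal diverges from the paper and where it has a genuine gap. You announce an explicit balanced arrangement as ``the heart of the proof'' and ``the main obstacle,'' sketch a strategy (pair $j$ with $-j$, control $\sigma_{i+1}-\sigma_i=c_{i+k}-c_i$), and check two tiny examples, but you never produce the general construction or verify it. As written, the argument is a plan rather than a proof, and the plan is the hard part you have not done.

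The paper sidesteps this entirely with a two-line citation argument you overlooked. Since $n\equiv k/2\pmod k$, one has $n=(2m+1)(k/2)$, so $g:=\gcd(n,k)=k/2$; when $n$ is odd this forces $g$ odd. Then Theorem~\ref{thm:AFG}\,(vi) (the gcd reduction from \cite{AFG}) gives
\[
\disc(n,k)\le \disc(n/g,\,k/g)=\disc(n/g,\,2)=1
\]
by Theorem~\ref{thm:AFG}\,(ii). Combined with \eqref{chuui} and $\msum\le\disc$, this pins both quantities at $1$. So the corollary is really a formal consequence of known results; no new construction is needed. Your direct approach could in principle be made to work (and would be interesting as a self-contained construction), but the paper's route is both complete and far shorter.
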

Corollary \ref{kei} follows from Theorem \ref{main2} together with some results in \cite{AFG}. 
We will prove this in Section \ref{preliminary} after presenting the results in \cite{AFG}. 
We remark that $\disc(n,k)=1$ does not hold in general if $n$ is even with $n \equiv k/2$ (mod $k$). See Proposition \ref{disc_msum}.

Collectively, the exact values of $\msum(n,3)$, $\msum(n,4)$, $\msum(n,5)$ and $\msum(n,6)$ 
(and some inequalities for $\msum(n,5)$) can be obtained from those results above together with some known results. 
\begin{cor}\label{meidai}
{\em (a) ($k=3$)} We have $$\msum(n,3)=\begin{cases}
1/2 \;\; &\text{ if }n = 6, \\
1 \;\; &\text{ if }n=9,15, \\
3/2 &\text{ if $n$ is even with $n \geq 8$}, \\
2 &\text{ if $n$ is odd with $n \geq 7$ all but $n =9,15$}. 
\end{cases}$$
{\em (b) ($k=4$)} We have $$\msum(n,4)=\begin{cases}
2 \;\; &\text{ if $n$ is odd}, \\
1 \;\; &\text{ if $n$ is even}. \end{cases}$$
{\em (c) ($k=5$)} We have $$\msum(n,5)=\begin{cases}
1/2 &\text{ if $n=10$}, \\
3/2 &\text{ if $n \equiv 0$ (mod $10$) with $n \geq 20$}, \\
2 &\text{ if $n \equiv 5$ (mod $10$) with $n \geq 65$}, \\
5/2 &\text{ if $n \equiv 4,6$ (mod $10$)}, \\
3 &\text{ if $n \equiv 1,9$ (mod $10$)}, 
\end{cases} \text{ and }
\msum(n,5) \geq \begin{cases}
3/2 &\text{ if $n \equiv 2,8$ (mod $10$)}, \\
2 &\text{ if $n \equiv 3,7$ (mod $10$)}. 
\end{cases}$$
{\em (d) ($k=6$)} We have $$\msum(n,6)=\begin{cases}
3 \;\; &\text{ if }n \equiv 1,5 \text{ (mod $6$)}, \\
1 \;\; &\text{ if }n \equiv 0,2,3,4 \text{ (mod $6$)}.
\end{cases}$$
\end{cor}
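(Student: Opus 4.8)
The plan is to derive Corollary \ref{meidai} purely by assembling the general results already established, specializing to $k\in\{3,4,5,6\}$ and translating each residue class of $n$ (stated modulo $6$ or $10$) into the hypotheses of Theorems \ref{main1}, \ref{main2}, \ref{main3}, Proposition \ref{main4} and Corollary \ref{kei}, while using the parity dichotomy \eqref{chuui} to upgrade the strict inequalities ``$\msum>1$'' into the asserted bounds. Thus no new construction is needed: the task is careful bookkeeping of congruences together with the values permitted by \eqref{chuui}. For the handful of cases with $k<n<2k$ one may first apply the symmetry $\msum(n,k)=\msum(n,n-k)$ of Proposition \ref{equal} to reduce to $n\geq 2k$.

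I would begin with the two cleanest cases. For $k=4$ (even), an odd $n$ satisfies $n\equiv\pm1\pmod 4$, so Theorem \ref{main1}(a) gives $\msum(n,4)=k/2=2$, while an even $n$ falls under Theorem \ref{main2}, giving $\msum(n,4)=1$; this settles (b). For $k=6$ (even), the classes $n\equiv1,5\pmod 6$ are exactly $n\equiv\pm1\pmod 6$, so Theorem \ref{main1}(a) yields $k/2=3$; the even classes $n\equiv0,2,4\pmod 6$ are handled by Theorem \ref{main2} (value $1$); and the remaining class $n\equiv3\pmod 6$ is precisely $n\equiv k/2\pmod k$, so Corollary \ref{kei} gives $1$. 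This settles (d).

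Next I would treat $k=5$ by splitting each residue modulo $10$ into its residue modulo $5$ together with the parity of $n$. The equalities come directly from Theorem \ref{main1}(b): $n=10=2k$ gives $1/2$; $n\equiv0\pmod{10}$ with $n\geq20=4k$ gives $3/2$; $n\equiv5\pmod{10}$ (i.e. $n\equiv0\pmod 5$ with $n$ odd) and $n\geq65=k(2k+3)$ gives $2$; the classes $n\equiv4,6\pmod{10}$ are $n\equiv\pm1\pmod 5$ with $n$ even, giving $k/2=5/2$; and $n\equiv1,9\pmod{10}$ are $n\equiv\pm1\pmod 5$ with $n$ odd, giving $(k+1)/2=3$. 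For the inequalities, the class $n\equiv2\pmod 5$ (i.e. $n\equiv2,7\pmod{10}$) is covered by Proposition \ref{main4}, and the class $n\equiv3=(k+1)/2\pmod 5$ (i.e. $n\equiv3,8\pmod{10}$) by Theorem \ref{main3}, each giving $\msum(n,5)>1$; by \eqref{chuui} this forces $\msum\geq3/2$ when $n$ is even (classes $2,8$) and $\msum\geq2$ when $n$ is odd (classes $3,7$), which is exactly the stated bound.

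Finally I would address $k=3$, where I expect the main (and essentially only) obstacle to lie. The generic part is routine: for $n\equiv\pm1\pmod 3$, Theorem \ref{main1}(b) gives $3/2$ when $n$ is even and $2$ when $n$ is odd; for $n\equiv0\pmod 3$ even it gives $1/2$ at $n=2k=6$ and $3/2$ for $n\geq4k=12$; and for $n\equiv0\pmod 3$ odd it gives $2$ once $n\geq k(2k+3)=27$. The gap is the set of odd multiples of $3$ below this threshold, namely $n=9,15,21$, which Theorem \ref{main1} does not reach; these are exactly the exceptional values appearing in part (a). I would supply them from the previously tabulated results, i.e. Corollary \ref{previous} (drawn from \cite{AFG} and \cite{MM}), which records $\msum(9,3)=\msum(15,3)=1$ and $\msum(21,3)=2$. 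The care required is to verify that every $n$ in each claimed range is accounted for exactly once and that these three small exceptions are matched correctly to the known data; once that is checked, the four displayed formulas follow.
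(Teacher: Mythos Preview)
Your assembly is exactly the route the paper intends, and parts (b), (c), (d) as well as the generic cases of (a) are handled correctly. There is, however, one genuine gap in part (a), at the single value $n=21$.

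You write that Corollary \ref{previous} ``records $\msum(9,3)=\msum(15,3)=1$ and $\msum(21,3)=2$'', but Corollary \ref{previous} is a statement about $\disc$, not $\msum$. For $n=9,15$ this is harmless: $\disc(n,3)=1$ gives $\msum(n,3)\le 1$, and \eqref{chuui} gives $\msum(n,3)\ge 1$, so equality follows. For $n=21$ it is not: from $\disc(21,3)=2$ you only obtain $\msum(21,3)\le 2$, while \eqref{chuui} yields merely $\msum(21,3)\ge 1$. No result proved in the paper closes this gap: Theorem \ref{main1}(b) for $n\equiv 0\pmod 3$ with $n$ odd requires $n\ge k(2k+3)=27$, and the lemma behind it in Section \ref{n=0} needs $m\ge 2k+3=9$, whereas $21=7\cdot 3$. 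The paper itself flags this in the remark immediately following Corollary \ref{meidai}: ``$\msum(21,3)=2$ is not our result. This comes from \cite{Y}.'' So for this one value you must invoke \cite{Y} rather than Corollary \ref{previous}. With that correction, your derivation is complete and matches the paper's.
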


\begin{rem}
We must remark that $\msum(21,3)=2$ is not our result. This comes from \cite{Y}. 
Moreover, $\msum(6,3), \msum(9,3)=\msum(9,6)$ and $\msum(15,3)$ can be obtained 
only from Theorem \ref{thm:AFG} (i) and \eqref{chuui}. 
In fact, we have $1/2 \leq \msum(6,3) \leq \disc(6,3) = 1/2$ and the similar inequalities hold for $\msum(9,3)$ and $\msum(15,3)$. 
Note that $\disc(15,3)=1$ is mentioned in \cite{MM}. 
\end{rem}

\begin{rem}\label{rima-ku}
From Corollaries \ref{meidai} and \ref{previous}, we see that $\msum(n,3)$ coincides with $\disc(n,3)$ for any $n$. 
However, we emphasize that $\msum(n,4)$ is not necessarily equal to $\disc(n,4)$. 
In fact, we have $\msum(4m+2,4)=1$ and $\disc(4m+2,4)=2$ for any $m \geq 2$. See Proposition \ref{disc_msum}. 
\end{rem}

\section*{Acknowledgements}
The authors would like to thank Tomoki Yamashita for the proof of Lemmas \ref{hodai:mk+1} and \ref{hodai:mk-1} 
that are come from the personal communication with him in the case $k=3$. 
Originally he told the authors the main problem of this paper and the references \cite{AFG} and \cite{MM}. 
The authors would also like to thank Akitoshi Kawamura, 
who pointed out that the desired examples in Section \ref{n=-1} can be obtained from the examples given in Section \ref{n=1}. 
The authors would also like to thank anonymous referees for their careful readings and many helpful comments which make this paper more readable. 

The first author is partially supported by JSPS Grant-in-Aid for Young Scientists (B) $\sharp$17K14177. 

\bigskip


\section{Preliminaries and Known results}\label{preliminary}

In this section, we first prove two propositions. 
We also collect the known results from \cite{AFG} and \cite{MM} concerning $\disc(n,k)$ and $\msum(n,k)$. 
As a corollary of those previous results, we can see the values of $\disc(n,k)$ for some particular cases (see Corollary \ref{previous}). 
At last, we give a proof of Corollary \ref{kei}.

\begin{prop}\label{equal}
For any $n$ and $k$, we have the equalities \begin{align*} \msum(n,n-k) = \msum(n,k) \;\text{ and }\; \disc(n,n-k)=\disc(n,k). \end{align*}
\end{prop}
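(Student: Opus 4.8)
The plan is to exploit the fact that a window of $k$ consecutive positions and its cyclic complement of $n-k$ positions together cover all of $1,\ldots,n$. Write $s_i(\pi,k)=\sum_{j=0}^{k-1}\pi_{i+j}$ for the $k$-consecutive sum of $\pi$ starting at position $i$ (the quantity called $s_i$ in the introduction). Since the positions $i,i+1,\ldots,i+k-1$ and $i+k,i+k+1,\ldots,i+n-1$ partition $\{1,\ldots,n\}$ cyclically, and $\sum_{\ell=1}^n\ell=n(n+1)/2$, I first record the basic identity
$$ s_i(\pi,k)+s_{i+k}(\pi,n-k)=\frac{n(n+1)}{2}\qquad (i=1,\ldots,n). $$
The entire argument rests on this single relation, together with the fact that as $i$ runs over $1,\ldots,n$, so does $i+k$ modulo $n$.

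For the discriminant the work is immediate. Rearranging the identity gives
$$ s_{i+k}(\pi,n-k)-\frac{(n-k)(n+1)}{2}=-\left(s_i(\pi,k)-\frac{k(n+1)}{2}\right), $$
so the two deviations from their respective averages have equal absolute value. Taking the maximum over $i$ (equivalently over $i+k$) yields $\disc(\pi,n-k)=\disc(\pi,k)$ for every single $\pi\in S_n$, and hence $\disc(n,n-k)=\disc(n,k)$ after minimizing. This case needs nothing beyond the identity above.

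The $\msum$ case is the delicate one, and is where I expect the real work to lie. The same identity only gives $\max_i s_i(\pi,n-k)=\frac{n(n+1)}{2}-\min_i s_i(\pi,k)$, so that $\msum(\pi,n-k)=\frac{k(n+1)}{2}-\min_i s_i(\pi,k)$; this measures how far the \emph{smallest} $k$-sum lies below the average, not how far the largest lies above it, and so it is not directly comparable to $\msum(\pi,k)$ for the same $\pi$. To convert a minimum into a maximum while staying inside $S_n$, I would introduce the complement permutation $\bar\pi$ defined by $\bar\pi_i:=n+1-\pi_i$. Then $s_i(\bar\pi,k)=k(n+1)-s_i(\pi,k)$, whence $\max_i s_i(\bar\pi,k)=k(n+1)-\min_i s_i(\pi,k)$ and therefore $\msum(\bar\pi,k)=\frac{k(n+1)}{2}-\min_i s_i(\pi,k)=\msum(\pi,n-k)$. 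Since $\pi\mapsto\bar\pi$ is an involution, hence a bijection, of $S_n$, minimizing over all permutations gives $\msum(n,k)=\min_\pi\msum(\bar\pi,k)=\min_\pi\msum(\pi,n-k)=\msum(n,n-k)$. The one point requiring care, and the crux of the matter, is recognizing that the window-complement identity by itself relates $\max$ to $\min$; the extra reflection $\pi\mapsto n+1-\pi$ is precisely what is needed to restore a statement purely about maxima.
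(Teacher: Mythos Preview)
Your proof is correct and uses essentially the same two ingredients as the paper: the window-complement identity $s_i(\pi,k)+s_{i+k}(\pi,n-k)=n(n+1)/2$ and the involution $\pi\mapsto\bar\pi$ with $\bar\pi_i=n+1-\pi_i$. The paper's proof simply chains these two steps in one compact string of equalities (applying $\pi\mapsto n+1-\pi$ first, then the complement identity), while you apply them in the opposite order and narrate why both are needed; the content is the same.
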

\begin{proof}
We see the following: 
\begin{align*}
\msum(n,n-k) &= \min\left\{\max\left\{\sum_{j=0}^{n-k-1}\pi_{i+j} - \frac{(n-k)(n+1)}{2} : i=1,\ldots,n\right\} : \pi \in S_n \right\} \\
&= \min\left\{\max\left\{\sum_{j=0}^{n-k-1}(n+1-\pi_{i+j}) - \frac{(n-k)(n+1)}{2} : i=1,\ldots,n\right\} : \pi \in S_n \right\} \\
&= \min\left\{\max\left\{-\sum_{j=0}^{n-k-1}\pi_{i+j} + \frac{(n-k)(n+1)}{2} : i=1,\ldots,n\right\} : \pi \in S_n \right\} \\
&= \min\left\{\max\left\{-\left(\frac{n(n+1)}{2}-\sum_{j=0}^{k-1}\pi_{i+j}\right) + \frac{(n-k)(n+1)}{2} : i=1,\ldots,n\right\} : \pi \in S_n \right\} \\
&= \min\left\{\max\left\{\sum_{j=0}^{k-1}\pi_{i+j} - \frac{k(n+1)}{2} : i=1,\ldots,n\right\} : \pi \in S_n \right\} \\
&= \msum(n,k). 
\end{align*}
The equality $\disc(n,n-k)=\disc(n,k)$ can be proved in the same way as above. 
\end{proof}

\begin{prop}\label{disc_msum} We have $ \disc(4m+2,4) = 2$ for any $m \geq 2$. \end{prop}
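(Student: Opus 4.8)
The strategy is to prove the two inequalities $\disc(4m+2,4)\le 2$ and $\disc(4m+2,4)\ge 2$ separately. Since $n=4m+2$ and $k=4$ are both even, \eqref{chuui} guarantees that $\disc(4m+2,4)$ is a positive integer, so the lower bound only requires excluding the value $1$. Throughout I write $A=2(n+1)=8m+6$ for the common average of the window sums and $N=2m+1$. The basic tool is the identity $s_{i+1}-s_i=\pi_{i+4}-\pi_i$, which turns control on the spread of the $s_i$ into control on the quantities $|\pi_{i+4}-\pi_i|$.

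For the upper bound I would exhibit an explicit permutation. Place the small half $\{1,\dots,N\}$ on the odd positions and the large half $\{N+1,\dots,2N\}$ on the even positions, using the same ``zigzag'' on each: for $t=1,\dots,N$ set $\pi_{2t-1}=x_t$ and $\pi_{2t}=N+x_t$, where $x_t=t$ if $t$ is odd and $x_t=N+1-t$ if $t$ is even. Then $u_t:=x_t+x_{t+1}$ equals $N$ for odd $t<N$ and $N+2$ for even $t$, with the single wrap-around value $u_N=N+1$; writing $w_t=u_t-(N+1)\in\{-1,0,1\}$, a short computation gives $s_{2t-1}=A+2w_t$ and $s_{2t}=A+(w_t+w_{t+1})$. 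Hence every window sum lies in $[A-2,A+2]$, so $\disc(\pi,4)\le 2$. The reason the naive alternating arrangement fails while this one succeeds is that the two half-sequences must ``turn around'' compatibly at the seam, and the zigzag is exactly what makes the wrap-around harmless.

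For the lower bound, suppose for contradiction that some $\pi$ has all $s_i\in\{A-1,A,A+1\}$, so that $|\pi_{i+4}-\pi_i|\le 2$ for every $i$. The map $i\mapsto i+4$ on $\ZZ/(4m+2)$ has exactly two orbits, the odd and the even positions, each of odd length $N$ and each a single $N$-cycle. I would first show that along each orbit the values form a block of $N$ consecutive integers: a cyclic sequence of $N$ distinct integers with consecutive gaps at most $2$ has total variation at most $2N$, forcing its range to be at most $N$; since $N$ distinct integers have range at least $N-1$, and range exactly $N$ would force every gap to equal $2$ (hence constant parity, impossible), the range is $N-1$ and the values are consecutive. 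As the two blocks partition $\{1,\dots,2N\}$, one orbit carries $\{1,\dots,N\}$ and the other $\{N+1,\dots,2N\}$. A second rigidity step identifies the arrangement on each orbit with the zigzag above up to rotation and reflection: an elementary count shows exactly two gaps equal $1$ and the remaining $N-2$ equal $2$, so the cycle splits into one monotone run through the odd values and one through the even values, which pins it down uniquely.

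Finally I would derive the contradiction. With the halves split by parity each window has two small and two large entries, and the hypothesis $s_i\in\{A-1,A,A+1\}$ becomes $w_j+\tilde w_j\in\{-1,0,1\}$ together with $w_{j+1}+\tilde w_j\in\{-1,0,1\}$, where $w$ and $\tilde w$ are the deviation sequences $u_j-(N+1)$ of the two orbits. By the rigidity step each of $w,\tilde w$ is, up to rotation, the alternating sequence $(-1,+1,-1,\dots,+1,0)$ of length $N$. At any index $j$ where $w_j,w_{j+1},\tilde w_j$ are all nonzero, two $\pm1$'s summing into $\{-1,0,1\}$ must cancel, so $\tilde w_j=-w_j$, whence $w_{j+1}+\tilde w_j=w_{j+1}-w_j=\pm2$, contradicting the second constraint. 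Since $w$ and $\tilde w$ have only one zero each, at most three indices are excluded, so a valid $j$ exists as soon as $N=2m+1>3$, i.e.\ $m\ge 2$. I expect the two rigidity steps (the consecutive-block claim and the unique-zigzag claim) to be the main obstacle; once the alternating $\pm1$ structure is in hand the clash is immediate, and it is precisely the oddness of $N$ that blocks the checkerboard cancellation that $\disc=1$ would demand.
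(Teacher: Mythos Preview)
Your argument is correct and genuinely different from the paper's. For the upper bound the paper simply cites \cite{AFG} (Theorem~\ref{thm:AFG}(v)), whereas you construct an explicit witness. For the lower bound the paper works globally and additively: from $|s_i-A|\le 1$ it extracts the pairwise constraint $3m+3\le \pi_i+\pi_{i+4\ell+1}\le 5m+3$ valid for every $\ell=0,\dots,m$, fixes $\pi_1=1$, determines the set $\{\pi_{4\ell+2}\}$, and then runs an induction on neighbouring values until two distinct positions are forced to carry the same number. Your route is instead local and structural: the bound $|\pi_{i+4}-\pi_i|\le 2$, together with the two odd-length orbits of $i\mapsto i+4$ on $\ZZ/(4m+2)\ZZ$, forces each parity class to carry a block of $N$ consecutive integers arranged in the unique zigzag, after which the deviation sequences $w,\tilde w$ are alternating $\pm 1$ with a single zero and a sign clash at any index avoiding the (at most three) exceptional positions finishes it. The paper's proof is quicker to write out because it sidesteps the rigidity classification; yours is more explanatory, making transparent both why discrepancy $1$ fails (the odd orbit length is incompatible with the checkerboard cancellation that $\disc=1$ would require) and why $m\ge 2$ is exactly the right threshold. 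Your two rigidity steps are stated tersely and would need the details you already anticipate (all gaps equal to $2$ forces constant parity, hence range $\ge 2(N-1)>N$; two gaps of $1$ split the cycle into a monotone odd run and a monotone even run, necessarily joined at $1$--$2$ and $(N{-}1)$--$N$). One point worth making explicit: a one-step rotation of the zigzag along the $+2$-cycle shifts the sequence $(a_t)$ by $2$, hence shifts $w$ by $2$, and since $\gcd(2,N)=1$ this indeed yields every rotation of $w$, so your ``up to rotation'' conclusion for $w$ is justified.
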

\begin{proof}
By Theorem \ref{thm:AFG} (v) below, we know that $\disc(4m+2,4) \leq 2$. Since $\disc(4m+2,4)$ is a positive integer by \eqref{chuui}, 
it will be enough to show $\disc(4m+2,4) \neq 1$. 

Suppose that $\disc(4m+2,4)=1$. Since the average of $4$-consecutive sums is $8m+6$, 
there exists $\pi=(\pi_1, \ldots, \pi_{4m+2}) \in S_{4m+2}$ such that $s_i \in \{8m+5,8m+6,8m+7\}$ for any $i=1,\ldots,4m+2$. 
Moreover, since we have 
\begin{align*}
\sum_{i=1}^{4m+2}\pi_i
=\pi_j+\sum_{r=0}^{\ell-1}s_{j+1+4r}+\pi_{j+4\ell+1}+\sum_{r=0}^{m-\ell-1}s_{j+4\ell+2+4r}
\end{align*}
for any $\ell=0,1,\ldots,m$ and $8m+5 \leq s_i \leq 8m+7$ for each $i$, we see that 
\begin{align*}
\sum_{i=1}^{4m+2}\pi_i - m(8m+7) \leq \pi_i + \pi_{i+4\ell+1} \leq \sum_{i=1}^{4m+2}\pi_i - m(8m+5) 
\end{align*}
for any $\ell=0,1,\ldots,m$. Hence, \begin{align}\label{cond_4m+2}3m+3 \leq \pi_i+\pi_{i+4\ell+1} \leq 5m+3.\end{align}

Without loss of generality, we may set $\pi_1 = 1$. Then it follows from \eqref{cond_4m+2} that $3m+3 \leq \pi_1+\pi_{4\ell+2} \leq 5m+3$, 
so we have $$\{\pi_{4\ell+2} : \ell=0,1,\ldots,m \} = \{3m+2, 3m+3, \ldots, 4m+2\}.$$ 
For $p=0,1,\ldots,m$, let $j_p$ be the index such that $\pi_{4j_p+2} = 4m+2-p$. 
We may assume that $j_0 \neq m$, otherwise we may reverse the order of $\pi_i$'s and set $j_0 = 0$. Let 
\begin{align*}
S &= \{ \pi_{4(\ell+j_0)+3} : \ell=0,1,\ldots,m\} \\
&= \{ \pi_{4\ell+3} : \ell=j_0,j_0+1,\ldots,m-1\} \cup \{ \pi_{4\ell+1} : \ell=0,1,\ldots,j_0\}. 
\end{align*}
Since $\pi_{4j_0+2}=4m+2$, we see from \eqref{cond_4m+2} that $ S = \{ 1,2,\ldots,m+1 \}$.
Now we claim the following: 
\begin{equation}\begin{split}\label{claim_4m+2}
\text{for each }p=1,2,\ldots,m, \text{ we have } 
\pi_{4j_p+1}&=m+1+p \text{ if } j_0<j_p \leq m, \text{ or}\\
\pi_{4j_p+3}&=m+1+p \text{ if } 0 \leq j_p<j_0. 
\end{split}\end{equation} 
We prove this statement by induction on $p$. When $p=1$, since $\pi_{4j_1+2}=4m+1$, we see from \eqref{cond_4m+2} that 
both $1 \leq \pi_{4j_1+1} \leq m+2$ and $1 \leq \pi_{4j_1+3} \leq m+2$. 
\begin{itemize}
\item If $j_0 < j_1 \leq m$, then $\pi_{4j_1+1} \notin S = \{ 1,2,\ldots,m+1 \}$, so we have $\pi_{4j_1+1} = m+2$. 
\item If $0 \leq j_1 < j_0$, then $\pi_{4j_1+3} \notin S = \{ 1,2,\ldots,m+1 \}$, so we have $\pi_{4j_1+3} = m+2$. 
\end{itemize}
Assume that \eqref{claim_4m+2} holds for each $1 \leq p \leq t$. 
By \eqref{cond_4m+2}, we see that $1 \leq \pi_{4j_{t+1}+1} \leq m+t+2$ and $1 \leq \pi_{4j_{t+1}+3} \leq m+t+2$. 
\begin{itemize}
\item If $j_0 < j_{t+1} \leq m$, then 
$$\pi_{4j_{t+1}+1} \notin S \cup \bigcup_{p=1}^t(\{ \pi_{4j_p+1} : j_0 < j_p \leq m\} \cup \{ \pi_{4j_p+3} : 0 \leq j_p < j_0\}) = \{ 1,2,\ldots,m+t+1 \},$$ 
so we have $\pi_{4j_{t+1}+1} = m+t+2$. 
\item If $0 \leq j_{t+1} < j_0$, then 
$$\pi_{4j_{t+1}+3} \notin S \cup \bigcup_{p=1}^t(\{ \pi_{4j_p+1} : j_0 < j_p \leq m\} \cup \{ \pi_{4j_p+3} : 0 \leq j_p < j_0\}) = \{ 1,2,\ldots,m+t+1 \},$$ 
so we have $\pi_{4j_{t+1}+3} = m+t+2$. 
\end{itemize}

The statement \eqref{claim_4m+2} can be rephrased by 
\begin{align}\label{sum_4m+2}
\text{either }\pi_{4j_p+1}+\pi_{4j_p+2}=5m+3 \text{ or } \pi_{4j_p+2}+\pi_{4j_p+3}=5m+3 \text{ holds for }p=1,2,\ldots,m. 
\end{align}
When $j_0=0$, i.e., $\pi_2=4m+2$, we see from \eqref{sum_4m+2} that 
\begin{align*}
\pi_{4m+1}+\pi_{4m+2}+\pi_1+\pi_2 &= (5m+3)+1+4m+2 = 9m+6, 
\end{align*}
but $9m+6 > 8m+7$ by $m \geq 2$, a contradiction. When $j_0 \neq 0$, we see from \eqref{sum_4m+2} that 
\begin{align*}
(\pi_{4m+1}+\pi_{4m+2})+\pi_1 = (\pi_3+\pi_2)+\pi_1 = 5m+4. 
\end{align*}
Then we have $\pi_{4m} \in \{3m+1,3m+2,3m+3\}$ and $\pi_4 \in \{3m+1,3m+2,3m+3\}$. 
However, $\pi_4, \pi_{4m} \notin \{ \pi_{4\ell+2} : \ell=0,1,\ldots,m \} = \{3m+2,3m+3, \ldots, 4m+2\}$. 
Thus $\pi_4 = \pi_{4m} = 3m+1$, a contradiction. 
\end{proof}
\begin{rem}
In \cite{S}, explicit computations show $\disc(4m+2,4)=2$ for $2 \leq m \leq 13$. 
Moreover, it is also showed in \cite{S} that $\disc(6m \pm 2,4)=2$ for $2 \leq m \leq 6$, 
$\disc(6m \pm 2,6)=2$ for $2 \leq m \leq 6$ and $\disc(8m \pm 2,8)=2$ for $2 \leq m \leq 4$. 
\end{rem}

\bigskip

We collect the results from \cite{AFG} and \cite{MM}. Most of them are the results on $\disc(n,k)$ 
but those can be used to give an upper bound for $\msum(n,k)$. 
\begin{thm}[{\cite{AFG}}]\label{thm:AFG}
We have the following: 
\begin{itemize}
\item[(i)] Let $k$ be odd. Then $\disc(2k,k)=1/2$ and $\disc(3k,k)=1$ (\cite[Theorem 2]{AFG}). 
\item[(ii)] For $n \geq 3$, we have $\disc(n,2)=1$ (\cite[Theorem 3]{AFG}). 
\item[(iii)] For $n \geq 6$, we have $\disc(n,3) \leq 2$ (\cite[Theorem 4]{AFG}). 
\item[(iv)] If $k$ is even, then $\disc(mk,k) = 1$. If $k$ is odd, then $\disc(mk,k) \leq 2$ (\cite[Theorem 7]{AFG}). 
\item[(v)] Let $g=\gcd(n,k)$ and assume that $g>1$. 
Then $\disc(n,k) \leq 2$ if $g$ is even and $\disc(n,k) \leq 7/2$ if $g$ is odd (\cite[Theorem 9]{AFG}). 
\item[(vi)] Let $g=\gcd(n,k)$ and assume that $g>1$ and $g$ is odd. Then $\disc(n,k) \leq \disc(n/g,k/g)$ (\cite[Theorem 10]{AFG}). 
\item[(vii)] Assume that $\gcd(n,k)=1$. Let $r$ be the integer with $1 \leq r \leq k-1$ such that $n \equiv r$ (mod $k$) and 
let $s$ be the smallest positive integer such that $rs \equiv \pm 1$ (mod $k$). Then $\disc(n,k) \geq k/2s$ (\cite[Theorem 11]{AFG}).  
\item[(viii)] Let $k$ be even and assume that $n \equiv \pm 1$ (mod $k$). Then $\disc(n,k)=k/2$ (\cite[Corollary 12]{AFG}). 
\end{itemize}
\end{thm}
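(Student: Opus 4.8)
The plan is to read Theorem \ref{thm:AFG} as a compendium of upper bounds, each proved by exhibiting an explicit permutation, and lower bounds, each proved either from the parity dichotomy \eqref{chuui} or from a number-theoretic constraint. The organising device I would introduce first is a reformulation in terms of partial sums. Setting $T_m=\sum_{\ell=1}^m\pi_\ell$, extended $n$-periodically by $T_{m+n}=T_m+\tfrac{n(n+1)}{2}$, and $u_m=T_m-m\tfrac{n+1}{2}$, one checks that $u$ is $n$-periodic, that its consecutive increments $u_m-u_{m-1}=\pi_m-\tfrac{n+1}{2}$ form the multiset $\{\,i-\tfrac{n+1}{2}:1\le i\le n\,\}$, symmetric about $0$ with zero total, and that
\[
s_i-\frac{k(n+1)}{2}=u_{i+k-1}-u_{i-1},
\qquad
\disc(\pi,k)=\max_{j}\bigl|u_{j+k}-u_j\bigr|.
\]
In this language every assertion becomes a statement about how far apart an $n$-periodic walk with a prescribed increment multiset must be at indices differing by $k$.

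For the upper bounds (i)--(v) and the upper half of (viii) I would construct the required walk directly. The case $n=2k$ of (i) is special: two opposite $k$-windows tile $\{1,\dots,n\}$, so $s_i+s_{i+k}=\tfrac{n(n+1)}{2}=2\cdot\tfrac{k(n+1)}{2}$, forcing $s_i-\mathrm{avg}=-(s_{i+k}-\mathrm{avg})$, and it suffices to interleave small and large entries so that every window sum attains one of the two central values. The remaining upper bounds follow the same recipe: a periodic zigzag of low and high entries, or a pattern of period $k$ when $k\mid n$, keeps the $k$-apart differences of $u$ small, and one reads off the constants $1$, $2$ or $k/2$ by a direct estimate. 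The composite cases (v) and (vi) I would instead treat by reduction: when $g=\gcd(n,k)$ is odd, split the $n$ positions into the $g$ residue classes modulo $g$, transplant an optimal permutation for $(n/g,k/g)$ class-by-class, and verify that each $k$-window deviation is inherited from the smaller problem, giving $\disc(n,k)\le\disc(n/g,k/g)$; the bounds $2$ and $7/2$ then come from feeding small base cases into this reduction.

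The lower bounds are where the substance lies. The inequalities $\disc\ge 1/2$ and $\disc\ge 1$ are immediate from \eqref{chuui}, and together with the constructions they already close (i), (ii) and the even case of (iv). The heart of the matter is the arithmetic bound (vii), of which (viii) is the instance $r=\pm1$, where $s=1$ and the bound reads $\disc\ge k/2$. Here $\gcd(n,k)=1$, so $j\mapsto j+k$ is a single $n$-cycle and the values $u_0,u_k,u_{2k},\dots$ exhaust $u_0,\dots,u_{n-1}$ with consecutive gaps at most $D:=\disc(\pi,k)$; telescoping $t$ such steps gives $\bigl|u_{j+tk}-u_j\bigr|\le tD$. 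The temptation is to choose $t$ with $tk\equiv 1\pmod n$ so that $u_{j+tk}=u_{j+1}$ equals a single increment as large as $\tfrac{n-1}{2}$, but this is far too lossy (it yields only $\disc\ge 1/2$ when $n=k+1$, whereas the truth is $k/2$). The correct accounting must instead exploit the residue $r$ with $n\equiv r\pmod k$ and the least $s$ with $rs\equiv\pm1\pmod k$, choosing a number of window-steps that returns the coverage to within a controlled defect and extracting the factor $k/2s$ from how the centrally symmetric increment multiset is forced to pile up. I expect this number-theoretic estimate, and not any construction or parity argument, to be the main obstacle, and I would isolate it as a standalone lemma on $n$-periodic walks with a fixed, centrally symmetric increment multiset before assembling the eight statements.
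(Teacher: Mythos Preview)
The paper does not prove Theorem~\ref{thm:AFG} at all: it is presented purely as a compendium of results quoted from \cite{AFG}, with each item attributed to a specific theorem or corollary there, and no argument is supplied in this paper. There is therefore nothing in the paper's text against which to compare your proposal.

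That said, a brief comment on the proposal itself. Your reformulation via the partial sums $u_m$ is sound, and the plan for the upper bounds (explicit constructions) and for the trivial lower bounds (parity via \eqref{chuui}) is reasonable in outline, if vague in detail. The genuine gap is item~(vii): you correctly identify it as the crux, you correctly observe that the naive telescoping bound is far too weak, but you then stop, saying only that the right estimate ``must instead exploit the residue $r$'' and that you ``would isolate it as a standalone lemma.'' No mechanism is offered for how the factor $k/(2s)$ actually emerges, and since (viii) is derived from (vii), the equality in (viii) is also not established by what you have written. Similarly, for (v) and (vi) your reduction idea (splitting into residue classes mod $g$) is the right shape, but you do not explain why oddness of $g$ is needed, nor how the constants $2$ and $7/2$ come out. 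As it stands the proposal is an outline with the hardest step explicitly deferred, not a proof.
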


\begin{thm}[\cite{MM}]\label{thm:MM}
We have the following: 
\begin{itemize}
\item[(i)] If $k$ is odd and $t > 1$, then $\disc(2kt,k)=\msum(2kt,k)=3/2$ (\cite[Corollay 2]{MM}). 
\item[(ii)] Let $n=6t+3 > 15$. Then $\disc(n,3)=2$ (\cite[Theorem 3]{MM}). 
\item[(iii)] If $m>2$, then $\msum(2km \pm 2, 2k)=1$ (\cite[Theorem 4]{MM}). 
\end{itemize}
\end{thm}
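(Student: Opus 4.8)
The three assertions are quoted from \cite{MM}, so the plan is to show how each can be recovered inside the present framework: in every case one inequality is cheap (coming from the integrality statement \eqref{chuui} or from Theorem \ref{thm:AFG}), while the reverse inequality is the real point and is obtained either by an explicit balanced construction or by a counting argument of the kind used in the proof of Proposition \ref{disc_msum}.

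For part (i) write $n=2kt$ with $k$ odd and $t>1$; then $n$ is even, the average $k(n+1)/2$ is a half-integer, and \eqref{chuui} forces $\disc(n,k)$ and $\msum(n,k)$ to be odd multiples of $1/2$. Since $n\equiv 0\pmod k$, $n$ is even and $n\geq 4k$, Theorem \ref{main1}\,(b) gives $\msum(n,k)=3/2$, and hence $\disc(n,k)\geq\msum(n,k)=3/2$. What remains is the matching bound $\disc(n,k)\leq 3/2$, i.e. a permutation all of whose $k$-consecutive sums lie in $\{(k(n+1)\pm 1)/2,\,(k(n+1)\pm 3)/2\}$. As $k\mid n$ I would seek such a $\pi$ by an explicit block/interleaving construction of the numbers $1,\dots,n$; the point to exploit is that $q:=n/k=2t$ is even while $k$ is odd, which should supply exactly the slack needed to push the generic estimate $\disc(mk,k)\leq 2$ of Theorem \ref{thm:AFG}\,(iv) down to $3/2$.

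For part (ii) set $n=6t+3>15$, so that $k=3$ and $n$ are both odd and all quantities are positive integers with average $9t+6$. The bound $\disc(n,3)\leq 2$ is immediate from Theorem \ref{thm:AFG}\,(iii), so the entire content is $\disc(n,3)\neq 1$. Assuming to the contrary that every $3$-consecutive sum lies in $\{9t+5,9t+6,9t+7\}$, I would rerun the argument of Proposition \ref{disc_msum} with $k=3$: the $q=2t+1$ pairwise disjoint windows in each residue class partition $\{1,\dots,n\}$, so their deviations from $9t+6$ sum to $0$; normalising $\pi_1=1$ and propagating the two-sided bounds on $\pi_i+\pi_{i+3\ell+1}$ (the analogue of \eqref{cond_4m+2}) by induction on the value being placed should then force two positions to carry the same entry once $n$ is large. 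Part (iii) is the special case of Theorem \ref{main2} in which both the window length $2k$ and $n=2km\pm 2$ are even (and $n>2k$ for $m>2$), giving $\msum(n,2k)=1$; alternatively the upper bound $\msum(n,2k)\leq 1$ is recovered directly from the complementary pairs $\{j,\,n+1-j\}$, each summing to $n+1$, by arranging the two symbols displaced by the $\pm 2$ defect so that every window of length $2k$ consists of $k$ such pairs up to one unit of error, the average being exactly $k(n+1)$.

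I expect the substantive difficulties to be the tight directions in (i) and (ii): exhibiting the balanced permutation realising $\disc(n,k)\leq 3/2$ for $n=2kt$ with $k$ odd, and making the inductive elimination giving $\disc(6t+3,3)\neq 1$ fully rigorous. As in the $k=4$ analysis of Proposition \ref{disc_msum}, the genuine work lies in the bookkeeping of which residue classes modulo $k$ receive which values; and in (ii) the hypothesis $n>15$ is precisely what must be used, since $n=15$ does admit $\disc=1$ and so any uniform argument has to break down there.
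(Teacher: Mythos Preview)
The paper does not prove Theorem~\ref{thm:MM}: it is quoted verbatim from \cite{MM} as a known result, so there is no ``paper's own proof'' to compare against. Your proposal is thus an attempt to re-derive \cite{MM}'s results from the machinery of the present paper, and on that reading it is largely sound but uneven.

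For (i) and (iii) your reductions are correct and non-circular: the proof of Theorem~\ref{main1}\,(b) in Section~\ref{n=0} (for $n\equiv 0\pmod k$, $n$ even, $n\geq 4k$) rests only on Lemma~\ref{hodai1}, Theorem~\ref{thm:AFG}\,(iv) and \eqref{chuui}, never on Theorem~\ref{thm:MM}; likewise the proof of Theorem~\ref{main2} in Section~\ref{nkeven} is a self-contained construction. So invoking those theorems to recover (i) and (iii) is legitimate. However, in (i) you are working too hard for the upper bound $\disc(2kt,k)\leq 3/2$: you already have $\disc(mk,k)\leq 2$ from Theorem~\ref{thm:AFG}\,(iv), and since $n=2kt$ is even while $k$ is odd, \eqref{chuui} forces $\disc(n,k)$ to be an odd half-integer, so $\leq 2$ immediately becomes $\leq 3/2$. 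No new construction is needed; this is exactly how Section~\ref{n=0} closes the case.

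For (ii) there is a genuine gap. The upper bound $\disc(6t+3,3)\leq 2$ is indeed Theorem~\ref{thm:AFG}\,(iii), but your plan to rule out $\disc(6t+3,3)=1$ by ``rerunning the argument of Proposition~\ref{disc_msum} with $k=3$'' is only a hope, not an argument. The Proposition~\ref{disc_msum} proof exploits the specific arithmetic of $n=4m+2$ and $k=4$ (in particular the shape of the index sets $\{4\ell+1\},\{4\ell+2\},\{4\ell+3\}$ and the two-sided bound \eqref{cond_4m+2}); transplanting it to $k=3$, $n=6t+3$ requires a fresh combinatorial analysis that you have not supplied, and the paper provides no lemma that does this for you. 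The hypothesis $n>15$ is, as you note, essential (since $\disc(9,3)=\disc(15,3)=1$), but you have not indicated where in your argument it would actually enter. As it stands, (ii) remains a citation of \cite{MM} rather than something derivable from the present paper.
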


We collect the results which follow from Theorems \ref{thm:AFG} and \ref{thm:MM}. 
\begin{cor}\label{previous}
{\em (a) ($k=3$)} We have $$\disc(n,3)=\begin{cases}
1/2 \;\; &\text{ if }n = 6, \\
1 &\text{ if }n = 9,15, \\
3/2 &\text{ if $n$ is even with $n \geq 8$}, \\
2 &\text{ if $n$ is odd with $n \geq 7$ all but $n=9,15$}. 
\end{cases}$$
{\em (b) ($k=4$)} We have $$\disc(n,4)\begin{cases}
=2 \;\;&\text{ if $n$ is odd}, \\
=1 &\text{ if $n \equiv 0$ (mod $4$)}, \\
\leq 2 &\text{ if $n \equiv 2$ (mod $4$)}. 
\end{cases}$$
{\em (c) ($k=5$)} We have $$\disc(n,5)\begin{cases}
=1/2 &\text{ if $n=10$}, \\
=1 &\text{ if $n=15$}, \\
=3/2 &\text{ if $n \equiv 0$ (mod $10$) with $n \geq 20$}, \\
\leq 2 &\text{ if $n \equiv 5$ (mod $10$)}, \\
\geq 5/2 &\text{ if $n \equiv \pm 1$ (mod $5$)}, \\
\geq 5/4 &\text{ if $n \equiv \pm 2$ (mod $5$)}.
\end{cases}$$
{\em (d) ($k=6$)} We have $$\disc(n,6)\begin{cases}
=3 \;\;&\text{ if $n \equiv 1,5$ (mod $6$)}, \\
=1 &\text{ if $n \equiv 0,3$ (mod $6$)}, \\
\leq 2 &\text{ if $n \equiv 2,4$ (mod $6$)}. 
\end{cases}$$
\end{cor}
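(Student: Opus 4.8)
The plan is to treat Corollary \ref{previous} as a purely organizational assembly of Theorems \ref{thm:AFG} and \ref{thm:MM}, sharpened wherever necessary by the parity dichotomy \eqref{chuui}. For each fixed $k\in\{3,4,5,6\}$ I would split the admissible $n$ according to $g=\gcd(n,k)$ and the residue $n\bmod k$, read off an upper bound and a lower bound for $\disc(n,k)$ from the quoted results, and then use \eqref{chuui} to collapse the resulting interval to a single admissible value. No deep idea is needed; the only genuine friction is a single isolated entry, $\disc(15,3)=1$, which the generic bounds do not pin down.

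First I would dispose of the even-$k$ parts (b) and (d), which are almost immediate. For $k=4$: every odd $n$ satisfies $n\equiv\pm1\pmod 4$, so Theorem \ref{thm:AFG}(viii) gives $\disc(n,4)=4/2=2$; the case $n\equiv 0\pmod 4$ is $\disc(4m,4)=1$ from Theorem \ref{thm:AFG}(iv); and $n\equiv 2\pmod 4$ has $g=2$ even, whence $\disc(n,4)\le 2$ by Theorem \ref{thm:AFG}(v). For $k=6$: the residues $n\equiv 1,5\pmod 6$ are $n\equiv\pm1\pmod 6$, giving $\disc(n,6)=6/2=3$ by Theorem \ref{thm:AFG}(viii); $n\equiv 0\pmod 6$ gives $\disc(6m,6)=1$ by Theorem \ref{thm:AFG}(iv); $n\equiv 2,4\pmod 6$ has $g=2$, so $\disc(n,6)\le 2$ by Theorem \ref{thm:AFG}(v); and for $n\equiv 3\pmod 6$ one has $g=3$ odd, so Theorem \ref{thm:AFG}(vi) yields $\disc(n,6)\le\disc(n/3,2)$, where $n/3$ is odd and $\ge 3$, so Theorem \ref{thm:AFG}(ii) gives $\disc(n/3,2)=1$. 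Combined with the integrality lower bound $\disc(n,6)\ge 1$ from \eqref{chuui}, this forces $\disc(n,6)=1$.

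The heart of the argument is parts (a) and (c), where I would systematically pair an upper bound with the lower bound of Theorem \ref{thm:AFG}(vii) and then invoke \eqref{chuui}. For $k=3$ with $\gcd(n,3)=1$ (i.e. $n\equiv\pm1\pmod 6$), the residue $r\in\{1,2\}$ has $s=1$ since $1\cdot1\equiv1$ and $2\cdot1\equiv-1\pmod 3$, so Theorem \ref{thm:AFG}(vii) gives $\disc(n,3)\ge 3/2$, while Theorem \ref{thm:AFG}(iii) gives $\disc(n,3)\le 2$ for $n\ge 6$. Now \eqref{chuui} decides the outcome by parity of $n$: for even $n$ the value is an odd integer divided by $2$, and the only such value in $[3/2,2]$ is $3/2$; for odd $n$ the value is an integer, and the only integer in $[3/2,2]$ is $2$. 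The multiples of $3$ are handled separately: Theorem \ref{thm:AFG}(i) gives $\disc(6,3)=1/2$ and $\disc(9,3)=1$; Theorem \ref{thm:MM}(i) gives $\disc(6t,3)=3/2$ for $t>1$; and Theorem \ref{thm:MM}(ii) gives $\disc(n,3)=2$ for odd $n=6t+3>15$. The same template runs for $k=5$: Theorem \ref{thm:AFG}(i) gives $\disc(10,5)=1/2$ and $\disc(15,5)=1$; Theorem \ref{thm:MM}(i) gives $\disc(10t,5)=3/2$ for $t>1$; Theorem \ref{thm:AFG}(iv) gives $\disc(n,5)\le 2$ for $n\equiv 5\pmod{10}$; and Theorem \ref{thm:AFG}(vii) supplies the lower bounds $\disc(n,5)\ge 5/2$ for $r\in\{1,4\}$ (where $s=1$) and $\disc(n,5)\ge 5/4$ for $r\in\{2,3\}$ (where $s=2$, as $2\cdot2\equiv-1$ and $3\cdot2\equiv1\pmod 5$).

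The one step that does not fall out of the template is the single value $\disc(15,3)=1$: here $15=5\cdot3$ yields only $\disc(15,3)\le 2$ from Theorem \ref{thm:AFG}(iv) (equivalently $\disc(15,3)\le\disc(5,1)=2$ from Theorem \ref{thm:AFG}(vi)), and \eqref{chuui} only gives $\disc(15,3)\ge 1$, so the quoted inequalities leave $\disc(15,3)\in\{1,2\}$ undecided; moreover $n=15$ is precisely the excluded boundary case $n=6t+3$ with $t=2$ in Theorem \ref{thm:MM}(ii). For this entry I would simply record the explicit value $\disc(15,3)=1$ established in \cite{MM}. I expect this lone exceptional value, rather than any of the residue-by-residue bookkeeping or the parity refinements, to be the only point requiring care.
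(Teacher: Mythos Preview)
Your proposal is correct and follows essentially the same approach as the paper's own proof: both are case-by-case assemblies of Theorems~\ref{thm:AFG} and~\ref{thm:MM}, sharpened by the parity constraint~\eqref{chuui}, with the isolated value $\disc(15,3)=1$ taken directly from~\cite{MM}. One small slip to fix: in part~(a) you write ``$\gcd(n,3)=1$ (i.e.\ $n\equiv\pm1\pmod 6$)'', but $\gcd(n,3)=1$ means $n\equiv\pm1\pmod 3$ (equivalently $n\equiv 1,2,4,5\pmod 6$); the argument itself is unaffected since you then correctly split on the parity of~$n$.
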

\begin{proof}
(a) $\disc(6,3)=1/2$ and $\disc(9,3)=1$ come from Theorem \ref{thm:AFG} (i). 
$\disc(15,3)=1$ is mentioned in the end of the proof of \cite[Theorem 3]{MM}. 

Let $n$ be even. Then it follows from Theorem \ref{thm:AFG} (iii) and \eqref{chuui} that we have $\disc(n,3) \leq 3/2$. 
When $n \not\equiv 0$ (mod $3$), i.e., $n \equiv \pm 1$ (mod $3$), we see that $\disc(n,3) \geq 3/2$ by Theorem \ref{thm:AFG} (vii). 
Hence, $\disc(n,3)=3/2$ if $n \not\equiv 0$ (mod $3$). 
When $n \equiv 0$ (mod $3$), we directly obtain $\disc(n,3)=3/2$ from Theorem \ref{thm:MM} (i). 

Let $n$ be odd with $n \geq 7$ and $n \neq 9,15$. Similar to the above discussion, 
we can see from Theorem \ref{thm:AFG} (iii), (vii) and \eqref{chuui} that $\disc(n,3)=2$ if $n \not\equiv 0$ (mod $3$). 
When $n \equiv 0$ (mod $3$), we obtain from Theorem \ref{thm:MM} (ii) that $\disc(n,3)=2$. 

(b) Let $n$ be odd, i.e., $n \equiv \pm 1$ (mod $4$). Then we have $\disc(n,4)=2$ by Theorem \ref{thm:AFG} (viii). 
Let $n \equiv 0$ (mod $4$). Then $\disc(n,4)=1$ by Theorem \ref{thm:AFG} (iv). Let $n \equiv 2$ (mod $4$). 
Then $\gcd(n,4)=2$, so we obtain $\disc(n,4) \leq 2$ by Theorem \ref{thm:AFG} (v). 

(c) $\disc(10,5)=1/2$ and $\disc(15,5)=1$ come from Theorem \ref{thm:AFG} (i). 
By Theorem \ref{thm:AFG} (iv), we see that $\disc(n,5) \leq 2$ if $n \equiv 5$ (mod $10$). 
The inequalities $\disc(n,5) \geq 5/2$ if $n \equiv \pm 1$ (mod $5$) and $\disc(n,5) \geq 5/4$ if $n \equiv \pm 2$ (mod $5$) 
follow from Theomre \ref{thm:AFG} (vii). 

(d) The cases $n \equiv \pm 1$ and $n \equiv 0$ (mod $6$) directly follow from Theorems \ref{thm:AFG} (viii) and (iv), respectively. 
Let $n \equiv 3$ (mod $6$). Then we see that $\gcd(n,6)=3$. Thus, it follows from Theorems \ref{thm:AFG} (ii) and (vi) 
together with \eqref{chuui} that $1 \leq \disc(n,6) \leq \disc(n/3, 2)=1$. 
The inequality $\msum(n,6) \leq 2$ in the case $n \equiv 2,4$ (mod $6$) follows from Theorem \ref{thm:AFG} (v) since $\gcd(n,6)=2$. 
\end{proof}

\begin{proof}[Proof of Corollary \ref{kei}] 
In the case $n$ is even, $\msum(n,k)=1$ directly follows from Theorem \ref{main2}. 

Assume that $n$ is odd. Let $g=\gcd(n,k)$. Then $g=k/2$ and $g$ is odd. Thus it follows from 
Theorems \ref{thm:AFG} (ii) and (vi) together with \eqref{chuui} that 
$$1 \leq \msum(n,k) \leq \disc(n,k) \leq \disc(n/g,k/g) = \disc(n/g,2)=1.$$ 
Hence, we conclude that $\msum(n,k)=\disc(n,k)=1$, as required. 
\end{proof}

\bigskip

\section{Proof of Theorem \ref{main1}: The case $n \equiv 1$ (mod $k$)}\label{n=1}
This section is devoted to giving a proof of Theorem \ref{main1} in the case $n \equiv 1$ (mod $k$). 
Throughout this section, let $n=mk+1$ with $m>1$. 

\begin{lem}\label{hodai:mk+1}
We have $\displaystyle \msum(mk+1,k) \geq \frac{k}{2}$. 
\end{lem}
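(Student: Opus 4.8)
The plan is to prove the equivalent statement that $\max_i s_i \geq \frac{k(n+2)}{2}$ holds for every $\pi \in S_n$, because subtracting the average $\frac{k(n+1)}{2}$ then yields $\msum(\pi,k) \geq \frac{k}{2}$, and minimizing over $\pi$ gives the claim. The structural feature of $n=mk+1$ that I would exploit is that exactly $m$ pairwise disjoint windows of length $k$ fit around the cycle with a single leftover position: for any starting index $i$, the windows indexed by $i, i+k, \ldots, i+(m-1)k$ occupy the $mk=n-1$ consecutive positions $i, i+1, \ldots, i+mk-1$, so (reading indices modulo $n$) the only uncovered position is $i+mk \equiv i-1$.

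First I would locate the index $i_0$ with $\pi_{i_0}=1$ and apply the observation above with $i=i_0+1$, so that the single omitted position is precisely $i_0$. The $m$ selected windows are then disjoint and their union carries all of the values $\{1,\ldots,n\}$ except $\pi_{i_0}=1$, whence
$$\sum_{r=0}^{m-1} s_{i_0+1+rk} = \frac{n(n+1)}{2} - \pi_{i_0} = \frac{n(n+1)}{2} - 1.$$
Choosing the leftover position to carry the smallest value $1$ is exactly what optimizes the resulting estimate.

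Next I would pass from this total to the maximum by averaging: the maximum of all $n$ sums $s_1,\ldots,s_n$ is at least the average of the $m$ selected ones, so
$$\max_i s_i \geq \frac{1}{m}\left(\frac{n(n+1)}{2} - 1\right).$$
Substituting $n=mk+1$ and using $\frac{n(n+1)}{2}-1 = \frac{(mk+1)(mk+2)-2}{2} = \frac{mk(mk+3)}{2}$, the right-hand side simplifies to $\frac{k(mk+3)}{2} = \frac{k(n+2)}{2}$, which is exactly the target bound. Subtracting $\frac{k(n+1)}{2}$ then gives $\msum(\pi,k)\geq \frac{k}{2}$, and since $\pi$ was arbitrary the lemma follows.

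I expect no serious obstacle here; the argument is short and purely combinatorial. The only points requiring care are the cyclic bookkeeping that makes the leftover position coincide with the location of the value $1$, and the elementary observation that $m>1$ (indeed $m\geq 1$) suffices for the disjoint windows and the averaging step to be valid.
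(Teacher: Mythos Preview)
Your proposal is correct and follows essentially the same approach as the paper: place (or locate) the value $1$ at a single position, take the $m$ disjoint $k$-windows covering the remaining $mk$ positions, sum to obtain $\frac{n(n+1)}{2}-1$, and average to get $\max_i s_i \geq \frac{k(n+2)}{2}$. The only cosmetic difference is that the paper normalizes by declaring $\pi_{mk+1}=1$ and uses the windows $s_1,s_{k+1},\ldots,s_{(m-1)k+1}$, whereas you phrase it in terms of an arbitrary location $i_0$; the computations are identical.
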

\begin{proof}
Take $\pi=(\pi_1,\ldots,\pi_{mk+1}) \in S_n$ arbitrarily. Without loss of generality, we may set $\pi_{mk+1}=1$. Then 
$$\sum_{i=0}^{m-1}s_{ik+1}=\sum_{i=1}^{mk}\pi_i=\frac{(mk+2)(mk+1)}{2} - 1=\frac{(mk)^2 + 3mk}{2}.$$ 
Thus the average of $s_1,s_{k+1},\ldots,s_{(m-1)k+1}$ is equal to $\displaystyle \frac{(mk)^2 + 3mk}{2} \cdot \frac{1}{m} = \frac{k(mk+2)}{2} + \frac{k}{2}$. 
Hence, we obtain that \begin{align*}
\msum(mk+1,k) \geq \min\{\max\{s_1,s_{k+1},\ldots,s_{(m-1)k+1}\} : \pi \in S_n\} - \frac{k(n+1)}{2} \geq \frac{k}{2}, 
\end{align*}
as desired. 
\end{proof}

By Lemma \ref{hodai:mk+1}, it will be sufficient to prove that $\msum(mk+1,k) \leq (k+1)/2$ or $k/2$ 
with respect to the parity of $n$ and $k$. (See \eqref{chuui}.) 
When $k$ is even, the inequality $\msum(mk+1,k) \leq k/2$ follows from Theorem \ref{thm:AFG} (viii) since $\msum(n,k) \leq \disc(n,k)$. 
Thus, we may concentrate on the case $k$ is odd. Moreover, by Theorem \ref{thm:AFG} (iii), we may also assume $k \geq 5$. 

For proving $\msum(mk+1,k) \leq (k+1)/2$ when $m$ is even or $\msum(mk+1,k) \leq k/2$ when $m$ is odd, 
it will be sufficient to show the existence of $\pi \in S_n$ with $\msum(\pi,k)=(k+1)/2$ when $m$ is even or $k/2$ when $m$ is odd, respectively. 

For proving that the example of $\pi \in S_n$ enjoys the required property, the use of the following notation is convenient. 

\begin{Notation}
For $\pi \in S_n$, we define $d_i = \pi_{i+k} - \pi_i$ and ${\bf d}(\pi)=(d_1,d_2,\ldots,d_n)$. 
Using this ${\bf d}(\pi)$, we can analyze the rise and fall of $k$-consecutive sums for $\pi$. 
Regarding ${\bf d}(\pi)$, we use the notation, e.g., ${\bf d}(\pi)=(1,(1,-1)^3,-1)$, which stands for ${\bf d}(\pi) = (1,1,-1,1,-1,1,-1,-1)$.
Note that $\sum_{i=1}^n d_i=0$. 
\end{Notation}

\bigskip

In the case $m$ is even, let 
\begin{align}\label{mk+1,kodd,meven}\pi_{ik+j} = \begin{cases}
  m+1-i, \;\;       &j=1, \\
  3m/2+i+2,         &j=2, \; 0 \leq i \leq m/2-1, \\
  m/2+i+2,          &j=2, \; m/2 \leq i \leq m-1, \\
  3m-2i+1,          &j=3, \; 0 \leq i \leq m/2-1, \\
  4m-2i,            &j=3, \; m/2 \leq i \leq m-1, \\
  3m+i+2,           &j=4, \\
  (j-1)m+2+i,       &j=5,7,\ldots,k, \\
  jm+1-i,           &j=6,8,\ldots,k-1 
\end{cases}\end{align}
for $i=0,1,\ldots,m-1$ and let $\pi_{mk+1} = 1$. For seeing $\pi_1,\ldots,\pi_{mk}$, we may read the numbers off from the following configuration 
from left to right of the first row, then the second row, and through the last row: $$\begin{pmatrix} 
m+1    &3m/2+2         &3m+1   &3m+2   &4m+2   &6m+1   &\cdots &\cdots &(k-1)m+2 \\ 
m      &3m/2+3         &3m-1   &3m+3   &4m+3   &6m     &\cdots &\cdots &(k-1)m+3 \\ 
\vdots &\vdots         &\vdots &\vdots &\vdots &\vdots &       & &\vdots \\
\vdots &2m+1           &2m+3   &\vdots &\vdots &\vdots &       & &\vdots \\
\vdots &m+2            &3m     &\vdots &\vdots &\vdots &       & &\vdots \\
\vdots &m+3            &3m-2   &\vdots &\vdots &\vdots &       & &\vdots \\
\vdots &\vdots         &\vdots &\vdots &\vdots &\vdots &       & &\vdots \\
2      &3m/2+1         &2m+2   &4m+1   &5m+1   &5m+2   &\cdots &\cdots &km+1 \end{pmatrix}$$
For this $\pi$, one has $\displaystyle s_1 = \frac{k(n+1)}{2} + \frac{k+1}{2}$. 
Let $k'=k-5$. From the above configuration, we can see that 
\begin{align*}
{\bf d}(\pi) = (&(-1,1,-2,1,1,(-1,1)^{k'})^{(m-2)/2},-1,-m+1,m-3,1,1,(-1,1)^{k'}, \\
&(-1,1,-2,1,1,(-1,1)^{k'})^{(m-2)/2},-1,\lambda_1,\ldots,\lambda_{k-1},\lambda'), 
\end{align*}
where $\lambda_i < 0$ and $\lambda'>0$. Note that $s_j=s_1+\sum_{i=1}^{j-1}d_i$. 
Since we have that $\sum_{i=1}^{j-1}d_i \leq 0$ for any $j$, we conclude that $\msum(\pi,k) = (k+1)/2$. 

In the case $m$ is odd, let 
\begin{align}\label{mk+1,kodd,modd}\pi_{ik+j} =
\begin{cases}
  m+1-i, \;\;       &j=1, \\
  (3m+3)/2+i,       &j=2, \; 0 \leq i \leq (m-1)/2, \\
  (m+3)/2+i,         &j=2, \; (m+1)/2 \leq i \leq m-1, \\
  3m-2i+1,          &j=3, \; 0 \leq i \leq (m-1)/2, \\
  4m-2i+1,          &j=3, \; (m+1)/2 \leq i \leq m-1, \\
  3m+i+2,           &j=4, \\
  (j-1)m+2+i,       &j=5,7,\ldots,k, \\
  jm+1-i,           &j=6,8,\ldots,k-1 
\end{cases}\end{align}
for $i=0,1,\ldots,m-1$ and let $\pi_{mk+1} = 1$. 
For $\pi_1,\ldots,\pi_{mk}$, we may read the numbers off from the following configuration 
from left to right of the first row, then the second row, and through the last row: $$\begin{pmatrix} 
m+1    &(3m+3)/2       &3m+1   &3m+2   &4m+2   &6m+1   &\cdots &\cdots &(k-1)m+2 \\ 
m      &(3m+5)/2       &3m-1   &3m+3   &4m+3   &6m     &\cdots &\cdots &(k-1)m+3 \\ 
\vdots &\vdots         &\vdots &\vdots &\vdots &\vdots &       & &\vdots \\
\vdots &2m+1           &2m+2   &\vdots &\vdots &\vdots &       & &\vdots \\
\vdots &m+2            &3m     &\vdots &\vdots &\vdots &       & &\vdots \\
\vdots &m+3            &3m-2   &\vdots &\vdots &\vdots &       & &\vdots \\
\vdots &\vdots         &\vdots &\vdots &\vdots &\vdots &       & &\vdots \\
2      &(3m+1)/2       &2m+3   &4m+1   &5m+1   &5m+2   &\cdots &\cdots &km+1 \end{pmatrix}$$
For this $\pi$, one has $\displaystyle s_1 = \frac{k(n+1)}{2} + \frac{k}{2}$. 
Let $k'=k-5$. From the above configuration, we can see that 
\begin{align*}
{\bf d}(\pi) = (&(-1,1,-2,1,1,(-1,1)^{k'})^{(m-2)/2},-1,-m+1,m-2,1,1,(-1,1)^{k'}, \\
&(-1,1,-2,1,1,(-1,1)^{k'})^{(m-2)/2},-1,\lambda_1,\ldots,\lambda_{k-1},\lambda'), 
\end{align*}
where $\lambda_i < 0$ and $\lambda'>0$. Note that $s_j=s_1+\sum_{i=1}^{j-1}d_i$. 
Since we have that $\sum_{i=1}^{j-1}d_i \leq 0$ for any $j$, we conclude that $\msum(\pi,k) = k/2$.

\begin{ex}
(a) Let $k=7$ and $m=6$, i.e., $n=43$. Note that $k(n+1)/2=154$. Then the above $\pi$ is given like 
\begin{align*}\pi=(&7, 11, 19, 20, 26, 37, 38, 6, 12, 17, 21, 27, 36, 39, 5, 13, 15, 22, 28, 35, 40, \\
&4, 8, 18, 23, 29, 34, 41, 3, 9, 16, 24, 30, 33, 42, 2, 10, 14, 25, 31, 32, 43, 1).\end{align*}
Note that this $\pi$ can be seen from the configuration 
$\begin{pmatrix} 7 &11 &19 &20 &26 &37 &38 \\ 6 &12 &17 &21 &27 &36 &39 \\ 5 &13 &15 &22 &28 &35 &40 \\ 
4 &8 &18 &23 &29 &34 &41 \\ 3 &9 &16 &24 &30 &33 &42 \\ 2 &10 &14 &25 &31 &32 &43 \end{pmatrix}$. 
We may read this configuration from left to right of the first row, and to the second row, and so on. 
We see that the maximal $7$-consecutive sum is $158$. \\
(b) Let $k=5$ and $m=5$, i.e., $n=26$. Note that $k(n+1)/2=135/2$. Then the above $\pi$ is given like 
$$\pi=(6, 9, 16, 17, 22, 5, 10, 14, 18, 23, 4, 11, 12, 19, 24, 3, 7, 15, 20, 25, 2, 8, 13, 21, 26, 1).$$
Note that this $\pi$ can be seen from the configuration 
$\begin{pmatrix} 6 &9 &16 &17 &22 \\ 5 &10 &14 &18 &23 \\ 4 &11 &12 &19 &24 \\ 3 &7 &15 &20 &25 \\ 2 &8 &13 &21 &26 \end{pmatrix}$. 
We may read this configuration from left to right of the first row, and to the second row, and so on. 
We see that the maximal $5$-consecutive sum is $70$. 
\end{ex}

\bigskip


\section{Proof of Theorem \ref{main1}: The case $n \equiv -1$ (mod $k$)}\label{n=-1}
This section is devoted to giving a proof of Theorem \ref{main1} in the case $n \equiv -1$ (mod $k$). 
Throughout this section, let $n=mk-1$ with $m > 2$.

\begin{lem}\label{hodai:mk-1}
We have $\displaystyle \msum(mk-1,k) \geq \frac{k}{2}$. 
\end{lem}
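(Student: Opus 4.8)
The plan is to mirror the argument of Lemma \ref{hodai:mk+1}, replacing the single removed element with the same normalization and exploiting the divisibility $n \equiv -1 \pmod k$. Write $n=mk-1$ with $m>2$. Take any $\pi \in S_n$, and without loss of generality normalize by a cyclic shift so that the entry equal to $n=mk-1$ sits in a convenient position; more precisely I would arrange that one particular block boundary falls on this largest entry, so that I can partition the remaining indices into $m-1$ blocks of length exactly $k$. Since $n+1=mk$ is divisible by $k$, the indices $1,\dots,n$ together with the phantom wrap-around behave well: removing one well-chosen entry leaves a multiple of $k$ many terms, which can be packaged into $k$-consecutive sums $s_{i}, s_{i+k}, \dots$ that are mutually disjoint and cover all remaining entries exactly once.

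The key computation is to evaluate the sum of those $m-1$ disjoint $k$-consecutive sums. If the omitted entry is $a$, this total equals $\sum_{i=1}^{n}\pi_i - a = \frac{n(n+1)}{2} - a = \frac{(mk-1)mk}{2} - a$. Choosing the normalization so that $a$ is as large as possible, i.e.\ $a = mk-1 = n$, makes the remaining sum as small as possible but still forces a large average; I would in fact want to choose $a$ to be the \emph{largest} available value to get the sharpest bound, as in the proof of Lemma \ref{hodai:mk+1} where $a=1$ was chosen for the opposite reason. The right choice here is dictated by which direction produces the lower bound $k/2$: the average of the $m-1$ disjoint sums is $\frac{1}{m-1}\left(\frac{(mk-1)mk}{2} - a\right)$, and I would compute this explicitly and compare it against $\frac{k(n+1)}{2} = \frac{k \cdot mk}{2}$. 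The difference between the average of the selected $s_i$'s and $\frac{k(n+1)}{2}$ should come out to exactly $\frac{k}{2}$ after substituting $a = mk-1$ and simplifying, and since the maximum of a set of numbers is at least their average, we get $\max_i s_i - \frac{k(n+1)}{2} \geq \frac{k}{2}$.

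I would then verify the arithmetic: with $a=mk-1$, the total of the $m-1$ disjoint sums is $\frac{(mk-1)mk}{2}-(mk-1) = (mk-1)\frac{mk-2}{2}$, so the average is $\frac{(mk-1)(mk-2)}{2(m-1)}$, and subtracting $\frac{mk^2}{2}$ should simplify to $\frac{k}{2}$; this is the one genuine calculation and the place where the hypothesis $n\equiv -1\pmod k$ is used, since it guarantees $n+1=mk$ is a multiple of $k$ so that exactly $m-1$ blocks of length $k$ tile the $mk-1-1 = mk-2$... here I must be careful that the count of blocks matches: after removing one entry we have $mk-2$ entries, which is \emph{not} a multiple of $k$, so unlike the $n\equiv 1$ case a single removal does not tile cleanly.

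The main obstacle is therefore this tiling discrepancy: in Lemma \ref{hodai:mk+1} the length was $mk+1$, removal of one entry gave $mk$, a clean multiple of $k$ into $m$ blocks; here removal gives $mk-2$, which is not. The honest fix is to \emph{not} remove an entry and instead sum $m$ disjoint $k$-consecutive sums that together cover all $mk-1$ positions plus one repeated wrap-around position, or equivalently to select the $m$ sums $s_1,s_{k+1},\dots,s_{(m-1)k+1}$ whose union counts every $\pi_i$ once except one index (the wrap-around overlap) which is counted twice; isolating that doubled entry and choosing it to be $1$ via normalization is exactly the mechanism of Lemma \ref{hodai:mk+1}. So I expect the cleanest route is to set $\pi_{j}=1$ at the overlap position, write $\sum_{i=0}^{m-1}s_{ik+1}=\sum_{\ell=1}^{n}\pi_\ell + 1 = \frac{(mk-1)mk}{2}+1$, take the average over the $m$ sums, and show the difference from $\frac{k(n+1)}{2}$ is at least $\frac{k}{2}$; confirming that the wrap-around structure for $n\equiv -1\pmod k$ produces precisely one doubly-counted index, and that it can be forced to equal $1$, is the step requiring the most care. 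An upper bound matching this, together with the parity information in \eqref{chuui}, would then be supplied separately (as in Section \ref{n=1}) to complete Theorem \ref{main1} in this case.
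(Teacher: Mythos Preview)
Your final approach has a sign error that kills the argument. When $n=mk-1$, the $m$ sums $s_1,s_{k+1},\ldots,s_{(m-1)k+1}$ cover every position once and position~$1$ \emph{twice}, so
\[
\sum_{i=0}^{m-1}s_{ik+1}=\frac{n(n+1)}{2}+\pi_1.
\]
If you normalize $\pi_1=1$, the average of these $m$ sums is $\frac{k(mk-1)}{2}+\frac{1}{m}=\frac{k(n+1)}{2}-\frac{k}{2}+\frac{1}{m}$, which lies \emph{below} the global average by almost $k/2$; this says nothing about the maximum. The analogy with Lemma~\ref{hodai:mk+1} is inverted: there the special position was \emph{omitted}, so one wants it small; here it is \emph{double-counted}, so one wants it large. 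You noticed this earlier in the proposal but then reverted to $\pi_j=1$.

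Even with the correct choice $\pi_1=n=mk-1$, the direct averaging only gives
\[
\max_i s_i-\frac{k(n+1)}{2}\;\ge\;\frac{k}{2}-\frac{1}{m},
\]
which falls short of the lemma by $1/m$. (One can recover the statement a~posteriori from \eqref{chuui}, but that is not what you wrote.) The paper closes this $1/m$ gap by a different device: it sets $\pi_1=mk-1$, assumes for contradiction that every $s_i\le\frac{mk^2+k-1}{2}$, and uses this assumed bound on the \emph{two} overlapping sums $s_1$ and $s_{(m-1)k+1}$ (which share the entry $\pi_1$) to control the wrap-around region exactly. The remaining $m-2$ sums $s_{k+1},\ldots,s_{(m-2)k+1}$ then cover the complement disjointly, and their average comes out to at least $\frac{mk^2}{2}+\frac{k}{2}$ on the nose, contradicting the assumed bound. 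The point is that spending the hypothesis on two sums rather than one is what eliminates the $1/m$ deficit.
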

\begin{proof}
Given an arbitrary $\pi = ( \pi_1 , \cdots , \pi_{mk-1} ) \in S_n$, we may set $\pi_1 = mk-1$ without loss of generality. 

Suppose that $\msum(mk-1,k) < k/2$. In other words, there exists a permutation $\pi$ such that
\begin{align}\label{eqs_i}s_i \leq \frac{k(n+1)}{2}+\frac{k-1}{2} = \frac{mk^2+k-1}{2}\end{align}
for each $i$. 

Since $s_1=\sum_{i=1}^k \pi_i \leq (mk^2+k-1)/2$ and $s_{(m-1)k+1}=\sum_{i=(m-1)k+1}^{mk-1} \pi_i+\pi_1 \leq (mk^2+k-1)/2$, we see that 
$$\pi_{(m-1)k+1} + \cdots + \pi_{mk-1} + \pi_1 + \cdots + \pi_k \leq mk^2 + k - 1 - (mk -1)= mk(k-1) + k. $$ 
Hence, \begin{align*}
\sum_{i=1}^{m-2} s_{ik+1} &= \frac{n(n+1)}{2} - ( \pi_{(m-1)k+1} + \cdots + \pi_k ) \geq \frac{mk(mk-1)}{2} - mk(k-1) - k \\
&= \left(\frac{mk^2}{2} + \frac{k}{2}\right) ( m-2 ). 
\end{align*}
Therefore, the average of $s_{k+1},s_{2k+1},\cdots,s_{(m-2)k+1}$ is greater than or equal to $\displaystyle \frac{mk^2}{2} + \frac{k}{2}$, 
a contradiction because of \eqref{eqs_i}. 
\end{proof}

By Lemma \ref{hodai:mk-1}, it will be sufficient to prove that $\msum(mk-1,k) \leq (k+1)/2$ or $k/2$ 
with respect to the parity of $n$ and $k$. (See \eqref{chuui}.) 
When $k$ is even, the inequality $\msum(mk-1,k) \leq k/2$ follows from Theorem \ref{thm:AFG} (viii). 
Thus, we may concentrate on the case $k$ is odd. Moreover, by Theorem \ref{thm:AFG} (iii), we may also assume $k \geq 5$.

Let $k$ be odd. Let $\pi=(\pi_1,\ldots,\pi_{mk+1}) \in S_{mk+1}$ given in \eqref{mk+1,kodd,meven} or \eqref{mk+1,kodd,modd}, 
and define $\pi'=(\pi_1',\ldots,\pi_{mk-1}') \in S_n$ by setting $\pi_i'=\pi_i-1$ for each $1 \leq i \leq mk-1$. 
In what follows, we show that this $\pi' \in S_{mk-1}$ is a desired permutation. 
Let ${\bf d}(\pi) = (d_1,\ldots,d_{mk+1})$ and let ${\bf d}(\pi') = (d_1',\ldots,d_{mk-1}')$. 
By the definition of $\pi'$, we can see that the first $((m-1)k-1)$ elements of ${\bf d}(\pi')$ coincide with those of ${\bf d}(\pi)$. 
Moreover, we can also see that $d_{(m-1)k}'<0$. 
From these discussions, we obtain that ${\bf d}(\pi') = (d_1,\ldots,d_{(m-1)k-1},d_{(m-1)k}',\lambda_1,\ldots,\lambda_{k-1})$, 
where $d_{(m-1)k}'<0$ and $\lambda_1,\ldots,\lambda_{k-1}>0$. Note that $s_j=s_1+\sum_{i=1}^{j-1}d_i$. 
Hence we conclude that $\displaystyle \msum(\pi',k) = (k+1)/2$ if $m$ is even 
(i.e., in the case \eqref{mk+1,kodd,meven}) and $k/2$ if $m$ is odd (i.e., in the case \eqref{mk+1,kodd,modd}).

\begin{ex}
(a) Let $k=7$ and $m=6$, i.e., $n=41$. Note that $k(n+1)/2=147$. Then the above $\pi'$ is given like 
\begin{align*}\pi'=(&6, 10, 18, 19, 25, 36, 37, 5, 11, 16, 20, 26, 35, 38, 4, 12, 14, 21, 27, 34, 39, \\
&3, 7, 17, 22, 28, 33, 40, 2, 8, 15, 23, 29, 32, 41, 1, 9, 13, 24, 30, 31).\end{align*}
Note that this $\pi$ can be seen from the configuration 
$\begin{pmatrix} 6 &10 &18 &19 &25 &36 &37 \\ 5 &11 &16 &20 &26 &35 &38 \\ 4 &12 &14 &21 &27 &34 &39 \\ 
3 &7 &17 &22 &28 &33 &40 \\ 2 &8 &15 &23 &29 &32 &41 \\ 1 &9 &13 &24 &30 &31 & \end{pmatrix}$. 
We see that the maximal $7$-consecutive sum is $151$. \\
(b) Let $k=5$ and $m=5$, i.e., $n=24$. Note that $k(n+1)/2=125/2$. Then the above $\pi'$ is given like 
$$\pi'=(5, 8, 15, 16, 21, 4, 9, 13, 17, 22, 3, 10, 11, 18, 23, 2, 6, 14, 19, 24, 1, 7, 12, 20).$$
Note that this $\pi$ can be seen from the configuration 
$\begin{pmatrix} 5 &8 &15 &16 &21 \\ 4 &9 &13 &17 &22 \\ 3 &10 &11 &18 &23 \\ 2 &6 &14 &19 &24 \\ 1 &7 &12 &20 & \end{pmatrix}$. 
We see that the maximal $5$-consecutive sum is $65$. 
\end{ex}

\bigskip


\section{Proof of Theorem \ref{main1}: The case $n \equiv 0$ (mod $k$)}\label{n=0}
This section is devoted to giving a proof of Theorem \ref{main1} in the case $n \equiv 0$ (mod $k$). 
Throughout this section, let $n=mk$ with $m>1$ and assume that $k$ is odd.

\begin{lem}\label{max}
Let $a_1,\ldots,a_m$ be distinct nonnegative integers 
and let $$\alpha = |\{i \in \{1,\ldots,m\}: a_{i-1} < a_i > a_{i+1}\}|,$$ where we let $a_0=a_m$ and $a_{m+1}=a_1$. 
Then we have the following: 
\begin{itemize}
\item[(i)] \; 
$\displaystyle \sum_{i=1}^m\max\{a_i,a_{i+1}\} \geq \sum_{i=1}^ma_i + \max\{a_1,\ldots,a_m\} - \min\{a_1,\ldots,a_m\} + (\alpha-1)$;
\item[(ii)] \; $\displaystyle \sum_{i=0}^{m-1}|a_i-a_{i+1}| \geq 2(m-1)+2(\alpha - 1) \geq 2(m-1)$. 
\end{itemize}
\end{lem}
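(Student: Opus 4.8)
The plan is to prove part (i) first and obtain part (ii) as an essentially free consequence. The elementary identity $\max\{x,y\}=\tfrac12(x+y+|x-y|)$ lets me rewrite the left-hand side of (i): reading indices cyclically with $a_{m+1}=a_1$ and using $\sum_{i=1}^m(a_i+a_{i+1})=2\sum_{i=1}^m a_i$, I get $\sum_{i=1}^m\max\{a_i,a_{i+1}\}=\sum_{i=1}^m a_i+\tfrac12\sum_{i=1}^m|a_i-a_{i+1}|$. Writing $M=\max_i a_i$ and $\mu=\min_i a_i$, statement (i) therefore becomes the single inequality $\sum_{i=1}^m|a_i-a_{i+1}|\ge 2(M-\mu)+2(\alpha-1)$ for the cyclic total variation. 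Because the $a_i$ are distinct nonnegative integers, $M-\mu\ge m-1$; and since $a_0=a_m$, the sum appearing in (ii) is exactly this same cyclic sum. Hence (i) immediately yields $\sum_{i=0}^{m-1}|a_i-a_{i+1}|\ge 2(m-1)+2(\alpha-1)\ge 2(m-1)$, which is (ii). So the whole lemma reduces to the displayed variation bound.

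Next I would exploit the structure of the extrema. Since the $a_i$ are distinct and arranged cyclically, local maxima and local minima strictly alternate, so there are exactly $\alpha$ of each; denote them $M_1,\dots,M_\alpha$ and $m_1,\dots,m_\alpha$. Breaking the cycle into its maximal monotone runs and telescoping along each run, every local maximum is the top of exactly two runs and every local minimum the bottom of exactly two runs, which gives the identity $\sum_{i=1}^m|a_i-a_{i+1}|=2\bigl(\sum_{j=1}^\alpha M_j-\sum_{j=1}^\alpha m_j\bigr)$. Setting $Q=\sum_{j=1}^\alpha M_j-\sum_{j=1}^\alpha m_j$, the variation bound is equivalent to $Q\ge(M-\mu)+(\alpha-1)$, where $M$ is one of the $M_j$ and $\mu$ is one of the $m_j$.

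I would prove $Q\ge(M-\mu)+(\alpha-1)$ by induction on $\alpha$. When $\alpha=1$ the sequence is unimodal and $Q=M-\mu$, matching the bound. For $\alpha\ge 2$, let $p=\min_j M_j$ be the smallest local maximum; since $\alpha\ge 2$ we have $p<M$, and in cyclic position $p$ sits between two local minima, at least one of which, say $q$, is different from $\mu$. Deleting the peak $p$ together with this adjacent valley $q$ merges the two monotone runs meeting at $p$; because $p$ is the \emph{smallest} local maximum, the local minimum surviving on the far side is still smaller than the local maximum that now becomes its neighbor, so the shortened cyclic sequence is again a strictly alternating zigzag, now with $\alpha-1$ maxima and $\alpha-1$ minima and with the same global extrema $M$ and $\mu$. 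The quantity changes by exactly $Q=Q_{\mathrm{new}}+(p-q)$, and $p>q\ge 0$ forces $p-q\ge 1$ as these are distinct integers; combined with the inductive bound $Q_{\mathrm{new}}\ge(M-\mu)+(\alpha-2)$, this yields $Q\ge(M-\mu)+(\alpha-1)$. The delicate point, and the step I expect to require the most care, is verifying that this deletion genuinely preserves the alternating structure while keeping $M$ and $\mu$ fixed; choosing $p$ to be the minimal local maximum and discarding a non-global adjacent minimum is precisely what makes both checks go through.
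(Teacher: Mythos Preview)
Your argument is correct and rests on the same key identity as the paper, namely that $\sum_i\max\{a_i,a_{i+1}\}-\sum_i a_i=\tfrac12\sum_i|a_i-a_{i+1}|=\sum_j M_j-\sum_j m_j$, after which one must show $Q:=\sum_j M_j-\sum_j m_j\ge(M-\mu)+(\alpha-1)$. The paper obtains this bound in one stroke by pairing each non-global local minimum with an adjacent local maximum (each pair contributing at least $1$) and then derives (ii) from (i), whereas you first reduce both parts to the variation bound and then prove it by induction on $\alpha$, peeling off one such pair at a time; the two routes are essentially the same.
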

\begin{proof}
(i) Let $a_1=\max\{a_1,\ldots,a_m\}$, and assume the following: 
$$a_1 > \ldots > a_{j_1} < \ldots < a_{j'_1} > \ldots > a_{j_2} < \ldots < a_{j'_2} > \ldots \ldots  > a_{j_{\alpha-1}} < \ldots < a_{j'_{\alpha-1}} > \ldots > a_{j_\alpha} < \ldots < a_m < a_1.$$
Let $a_{j_p} = \min\{a_1,\ldots,a_m\}$. Then we see that 
\begin{align*}
\sum_{i=1}^m\max\{a_i,a_{i+1}\} 
&= \sum_{i=1}^m a_i - \sum_{1 \leq i \leq \alpha, i \neq p} a_{j_i} -a_{j_p} + \sum_{1 \leq i \leq \alpha-1}a_{j'_i} + a_1 \\
&= \sum_{i=1}^m a_i + a_1 - a_{j_p} + \sum_{1 \leq i \leq \alpha-1}a_{j'_i}- \sum_{1 \leq i \leq \alpha, i \neq p} a_{j_i} \\
&= \sum_{i=1}^m a_i + a_1 - a_{j_p} + \sum_{1 \leq i \leq p-1}(a_{j'_i}-a_{j_i})+ \sum_{p+1 \leq i \leq \alpha}(a_{j'_{i-1}}-a_{j_i}) \\
&\geq \sum_{i=1}^m a_i + \max\{a_1,\ldots,a_m\} - \min\{a_1,\ldots,a_m\} + (\alpha-1). 
\end{align*}
(ii) By using (i), we see that 
\begin{align*}
2\sum_{i=1}^m\max\{a_i,a_{i+1}\} &\geq 2\left(\sum_{i=1}^m a_i + \max\{a_1,\ldots,a_m\} - \min\{a_1,\ldots,a_m\} + (\alpha-1)\right) \\
&\geq 2\sum_{i=1}^m a_i + 2(m-1) + 2(\alpha - 1), \text{ and } \\
2\sum_{i=1}^m a_i=\sum_{i=1}^m(a_i+a_{i+1}) &= \sum_{i=1}^m\max\{a_i,a_{i+1}\}+\sum_{i=1}^m\min\{a_i,a_{i+1}\}. 
\end{align*}
Hence, we obtain that 
$$\sum_{i=0}^{m-1}|a_i-a_{i+1}|=\sum_{i=1}^m\max\{a_i,a_{i+1}\} - \sum_{i=1}^m\min\{a_i,a_{i+1}\} \geq 2(m-1) + 2(\alpha - 1).$$ 
\end{proof}

\begin{lem}\label{hodai1}
We have $\displaystyle \msum(mk,k) \geq 1 - \frac{1}{m}$. 
\end{lem}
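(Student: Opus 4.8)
The plan is to reduce the stated bound to a statement about $m$ well-chosen consecutive sums and then invoke Lemma \ref{max}(i). First I would observe that, since $n=mk$, the desired inequality $\max_i s_i - \frac{k(n+1)}{2} \geq 1 - \frac{1}{m}$ is equivalent to $m \cdot \max_i s_i \geq \frac{n(n+1)}{2} + (m-1)$, because $\frac{mk(n+1)}{2} = \frac{n(n+1)}{2}$. Hence it suffices, for an arbitrary $\pi \in S_n$, to exhibit $m$ among the sums $s_i$ whose total is at least $\frac{n(n+1)}{2} + (m-1)$: the maximum of these $m$ values is then at least their average, which is exactly $\frac{k(n+1)}{2} + 1 - \frac{1}{m}$.

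The source of the extra $m-1$ will be the freedom to shift the blocks of the standard partition. Consider the $m$ consecutive blocks $[(t-1)k+1, tk]$ for $t=1,\ldots,m$, whose sums are the $s_{(t-1)k+1}$ and which add up to $\sum_{\ell=1}^n \pi_\ell = \frac{n(n+1)}{2}$. Shifting the $t$-th block one position to the right turns its sum $s_{(t-1)k+1}$ into $s_{(t-1)k+2}$, and the two differ by exactly $\pi_{tk+1} - \pi_{(t-1)k+1} = p_{t+1} - p_t$, where I write $p_t := \pi_{(t-1)k+1}$ for the $\pi$-value at the $t$-th block start (with $p_{m+1}=p_1$ by the convention $\pi_{n+i}=\pi_i$). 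Thus for each $t$ one has $\max\{s_{(t-1)k+1}, s_{(t-1)k+2}\} = s_{(t-1)k+1} + \max\{0,\, p_{t+1}-p_t\} = s_{(t-1)k+1} + \max\{p_t,p_{t+1}\} - p_t$.

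Summing this identity over $t=1,\ldots,m$ and applying Lemma \ref{max}(i) to the $m$ distinct positive integers $p_1,\ldots,p_m$ (arranged cyclically) yields
\begin{align*}
\sum_{t=1}^m \max\{s_{(t-1)k+1}, s_{(t-1)k+2}\} &= \frac{n(n+1)}{2} + \sum_{t=1}^m \max\{p_t,p_{t+1}\} - \sum_{t=1}^m p_t \\
&\geq \frac{n(n+1)}{2} + (m-1),
\end{align*}
since $\max\{p_1,\ldots,p_m\} - \min\{p_1,\ldots,p_m\} \geq m-1$ for distinct nonnegative integers and $\alpha \geq 1$ in Lemma \ref{max}(i). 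As the left-hand side is a sum of $m$ numbers each of which is one of the $s_i$, every summand is at most $\max_i s_i$, so $m \cdot \max_i s_i \geq \frac{n(n+1)}{2} + (m-1)$, which is the reformulated goal.

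The main point to get right is the bookkeeping of the shift: one must verify that shifting the $t$-th block right by one changes its sum precisely by $p_{t+1}-p_t$, that both the shifted and unshifted block sums are genuine consecutive sums $s_i$, and that the cyclic wrap-around at $t=m$ is correctly handled by $\pi_{n+i}=\pi_i$. Once this is set up, Lemma \ref{max}(i) does all the work; note that only the weak consequence $\sum_t \max\{p_t,p_{t+1}\} \geq \sum_t p_t + (m-1)$ is needed, so the finer term $\alpha-1$ plays no role in this particular argument.
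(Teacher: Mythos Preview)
Your proof is correct and follows essentially the same approach as the paper: both compare each block sum $s_{(t-1)k+1}$ with its one-step shift $s_{(t-1)k+2}$, note that the difference is $p_{t+1}-p_t$ with $p_t=\pi_{(t-1)k+1}$, sum the resulting $\max$'s over $t$, and invoke Lemma~\ref{max}(i) together with $\max p_t-\min p_t\geq m-1$. The only difference is cosmetic---the paper bounds $\msum(\pi,k)+\frac{k(n+1)}{2}$ directly by each $\max$ before summing, whereas you sum first and divide at the end.
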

\begin{proof}
Fix $\pi=(\pi_1,\ldots,\pi_n) \in S_n$. We relabel it by $$\pi=(\pi_{1,1},\pi_{1,2},\ldots,\pi_{1,k},\pi_{2,1},\ldots,\pi_{m,k}).$$ 
Then, for each $i=1,\ldots,m$, we see that \begin{align*}
\msum(\pi,k) + \frac{k(n+1)}{2} &\geq \max\left\{\sum_{j=1}^k\pi_{i,j}, \sum_{j=2}^k\pi_{i,j}+\pi_{i+1,1}\right\}
=\sum_{j=2}^k\pi_{i,j}+\max\{\pi_{i,1},\pi_{i+1,1}\}, 
\end{align*}
where we let $\pi_{m+1,j}=\pi_{1,j}$. Hence, \begin{align*}
m\cdot\msum(\pi,k) + \frac{n(n+1)}{2} &\geq \sum_{i=1}^m \left(\sum_{j=2}^k\pi_{i,j}+\max\{\pi_{i,1},\pi_{i+1,1}\}\right) \\
&\geq \sum_{i=1}^m \sum_{j=2}^k \pi_{i,j}+\sum_{i=1}^m\pi_{i,1}+\max\{\pi_{1,1},\ldots,\pi_{m,1}\}-\min\{\pi_{1,1},\ldots,\pi_{m,1}\} \\
&=\frac{n(n+1)}{2}+\max\{\pi_{1,1},\ldots,\pi_{m,1}\}-\min\{\pi_{1,1},\ldots,\pi_{m,1}\}. 
\end{align*}
Note that the second inequality follows from Lemma \ref{max} (i). 
This implies that $$\msum(mk,k) \geq \min\left\{\frac{\max\{\pi_{1,1},\ldots,\pi_{m,1}\}-\min\{\pi_{1,1},\ldots,\pi_{m,1}\}}{m} : \pi \in S_n\right\} \geq \frac{m-1}{m},$$
as required. 
\end{proof}


Recall that $k$ is odd. When $m$ is even with $m \geq 4$ (resp. $m=2$), we see that 
$3/2 \leq \msum(mk,k) \leq \disc(mk,k) \leq 3/2$ (resp. $1/2 \leq \msum(2k,k) \leq \disc(2k,k)=1/2$) 
by Lemma \ref{hodai1} and Theorem \ref{thm:AFG} (iv) (resp. Theorem \ref{thm:AFG} (i)). 
Thus, we obtain that $\msum(mk,k)=\disc(mk,k)=3/2$ when $m$ is even with $m \geq 4$ (resp. $\msum(2k,k)=1/2$). 

Hence, our remaining case is that $m$ is odd with $m \geq 3$. We know by Theorem \ref{thm:AFG} (iv) that $\msum(mk,k) \leq \disc(mk,k) \leq 2$, 
while a lower bound can be seen from the following: 
\begin{lem}
Let $m$ and $k$ be odd with $k \geq 3$ and $m \geq 2k+3$. Then $\msum(mk,k) \geq 2$. 
\end{lem}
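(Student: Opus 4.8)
The plan is to show that $\msum(mk,k)\neq 1$. Since $k$ is odd and $n=mk$ is odd, \eqref{chuui} tells us $\msum(mk,k)$ is a positive integer, while Theorem \ref{thm:AFG} (iv) together with $\msum(n,k)\le\disc(n,k)$ gives $\msum(mk,k)\le 2$. Hence $\msum(mk,k)\in\{1,2\}$, and the lemma is equivalent to ruling out the value $1$. So I would assume, toward a contradiction, that there is a $\pi\in S_{mk}$ with $s_i\le A+1$ for every $i$, where $A=k(mk+1)/2$ is the average of the $s_i$ (note $A\in\ZZ$ since $mk+1$ is even).

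First I would upgrade the estimate behind Lemma \ref{hodai1} into a rigidity statement, applied to \emph{every} residue class rather than just one. For each residue $t\in\{1,\dots,k\}$, re-block the cyclic sequence so that the blocks start at position $t$; comparing, for each block, the window equal to the block sum with the window obtained by shifting one step (exactly as in Lemma \ref{hodai1}), summing over the $m$ blocks, and invoking Lemma \ref{max} (i), one gets
\begin{align*}
m\cdot\msum(\pi,k)\ \ge\ \bigl(\max V_t-\min V_t\bigr)+(\alpha_t-1),
\end{align*}
where $V_t=\{\pi_\ell:\ell\equiv t\ (\mathrm{mod}\ k)\}$ and $\alpha_t$ is the number of cyclic local maxima of $(\pi_t,\pi_{t+k},\dots)$. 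Under $\msum(\pi,k)=1$, and since $\max V_t-\min V_t\ge m-1$ because $V_t$ consists of $m$ distinct integers, this forces $\max V_t-\min V_t\in\{m-1,m\}$ and $\alpha_t\le 2$ for every $t$. In words, every residue class is a set of $m$ almost-consecutive integers arranged almost-unimodally around the cycle of length $m$.

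In the principal case, where each $V_t$ is exactly a block of $m$ consecutive integers and is unimodal ($\alpha_t=1$), I would normalise: writing the value at a position as $((b-1)m+1)+u$ with $u\in\{0,\dots,m-1\}$ its rank inside its block, the block offsets cancel and the condition $s_i\le A+1$ collapses to the \emph{staircase} inequality
\begin{align*}
\sum_{c\ge r}u_{i,c}+\sum_{c<r}u_{i+1,c}\ \le\ \frac{k(m-1)}{2}+1
\end{align*}
for all rows $i$ and all cuts $r$, where $u_{i,c}\in\{0,\dots,m-1\}$ and each column $c$ is a unimodal permutation of $\{0,\dots,m-1\}$. The borderline cases $\max V_t-\min V_t=m$ and $\alpha_t=2$ I would dispose of as bounded perturbations of this one.

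The crux — and the step I expect to be hardest — is to contradict the staircase inequality when $m\ge 2k+3$. Here I would exploit unimodality: in each column the entries that are $\ge (m-1)/2$ occupy a cyclic arc of at least $(m+1)/2$ of the $m$ rows, so the $k$ such arcs have total length at least $k(m+1)/2>km/2$; a covering argument on the cycle of rows then yields a row met by at least $(k+1)/2$ of the arcs. The genuinely delicate point is that this alone is \emph{too weak} (a single large row sum does not exceed $k(m-1)/2+1$): the argument must instead combine such a row with a neighbouring one and use the freedom in the cut $r$ so that one staircase window harvests near-peak values from more than half of the columns across the two rows. Quantitatively the arcs overflow the cycle by a margin that becomes decisive precisely at the threshold $m\ge 2k+3$, which is where I expect the counting to close. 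Producing the explicit overfull window, and then handling the perturbed ($\alpha_t=2$ or spread-$m$) configurations, completes the contradiction and gives $\msum(mk,k)=2$.
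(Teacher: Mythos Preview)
Your opening reduction is fine, and the rigidity step is correct: applying the argument of Lemma~\ref{hodai1} together with Lemma~\ref{max}\,(i) to each residue class does yield $m\ge(\max V_t-\min V_t)+(\alpha_t-1)$, hence $\max V_t-\min V_t\in\{m-1,m\}$ and $\alpha_t\le 2$ under $\msum(\pi,k)=1$. This structural consequence is genuine and interesting. But from there the proposal does not close.

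The gap is exactly where you flag it. In the principal case you reduce to the staircase inequality $\sum_{c\ge r}u_{i,c}+\sum_{c<r}u_{i+1,c}\le k(m-1)/2+1$ with each column a cyclically unimodal permutation of $\{0,\dots,m-1\}$, and you correctly observe that the single-row pigeonhole (finding a row meeting $\ge(k+1)/2$ of the ``high'' arcs) is too weak. Your suggested fix---combining a row with a neighbour and choosing the cut $r$---is only a hope: you give no mechanism for forcing more than half the columns to contribute near-peak values to a single window, and the overflow $k(m+1)/2-km/2=k/2$ does not grow with $m$, so it is not clear why the threshold $m\ge 2k+3$ should emerge from this line. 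Nothing in the proposal pins down the explicit overfull window you promise. The perturbed cases ($\max V_t-\min V_t=m$, $\alpha_t=2$) are also not ``bounded perturbations'' in any obvious sense: once a block has a hole, the holes of different classes interlock in a nontrivial combinatorial pattern, and you offer no argument controlling it.

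The paper takes a completely different route and does not use the rigidity of the $V_t$ at all. It counts $R=|\{i:s_i=A+1\}|$ directly. A lower bound $R\ge mk/2+\sum_j\alpha_j-2k$ comes from summing $|s_{ik+j}-s_{ik+j+1}|$ via Lemma~\ref{max}\,(ii), bounding each term by $A+1-\min\{s_{ik+j},s_{ik+j+1}\}$, and playing off a multiset counting of the resulting minima. An upper bound $2R\le(k-1)m+2\min_j\alpha_j$ comes from a separate technical lemma (Lemma~\ref{technical}) bounding the number of runs in a nested sequence of two symbols. Comparing the two bounds gives $0\ge m/2-k-1$, which is violated precisely when $m\ge 2k+3$. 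So the threshold appears transparently, and no case analysis on the $V_t$ is needed.
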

\begin{proof}
Given $\pi=(\pi_1,\ldots,\pi_{mk}) \in S_{mk}$, let 
$\alpha_j=|\{ \pi_{ik+j} : \pi_{(i-1)k+j} < \pi_{ik+j} > \pi_{(i+1)k+j}\}|$ for $0 \leq i \leq m-1$. 
Note that for any $j$, we have $\alpha_j \geq 1$. 
Since $\pi_j,\pi_{k+j},\ldots,\pi_{(m-1)k+j}$ are all distinct integers, it follows from Lemma \ref{max} (ii) that 
\begin{align*}
\sum_{i=0}^{m-1}|s_{ik+j}-s_{ik+j+1}|=\sum_{i=0}^{m-1}|\pi_{ik+j}-\pi_{(i+1)k+j}| \geq 2(m-1) + 2(\alpha_j-1). 
\end{align*}

Suppose that $\msum(mk,k) < 2$, i.e., $\msum(mk,k) =1$. Then \begin{align*}
|s_{ik+j}-s_{ik+j+1}|=\max\{s_{ik+j},s_{ik+j+1}\}-\min\{s_{ik+j},s_{ik+j+1}\} \leq \frac{k(mk+1)}{2}+1-\min\{s_{ik+j},s_{ik+j+1}\}. 
\end{align*}
Thus, \begin{align*}
2(m-1)+2(\alpha_j-1) \leq \sum_{i=0}^{m-1}|s_{ik+j}-s_{ik+j+1}| 
\leq \frac{mk(mk+1)}{2}+m-\sum_{i=0}^{m-1}\min\{s_{ik+j},s_{ik+j+1}\}. 
\end{align*}
Hence, \begin{align*}
\sum_{j=1}^k\sum_{i=0}^{m-1}\min\{s_{ik+j},s_{ik+j+1}\} &\leq \sum_{j=1}^k\left( \frac{mk(mk+1)}{2}+m-2(m-1)-2(\alpha_j-1) \right) \\
&=mk \cdot \frac{k(mk+1)}{2}+4k-mk-2\sum_{j=1}^k \alpha_j. 
\end{align*}
Let $\widetilde{S}=\{\min\{s_{ik+j},s_{ik+j+1}\} : 0 \leq i \leq m-1, 1 \leq j \leq k\}$ be the multi-set and 
let $S$ be the same set as $\widetilde{S}$ but it is an ordinary set (not a multi-set). Namely, we have $S \subset \widetilde{S}$. 
Note that each $s \in \widetilde{S}$ appears once or twice in $\widetilde{S}$ (i.e., not more than twice). 
Let $S'=\{s \in S: s \text{ appears once in }\widetilde{S}\}$. Then 
\begin{align*}
\sum_{s \in S'} s \leq |S'| \cdot \frac{k(mk+1)}{2}. 
\end{align*}
Hence, from the previous two inequalities, we have
\begin{align*}
2\sum_{s \in S} s= \sum_{s \in \widetilde{S}} s + \sum_{s \in S'} s 
\leq mk \cdot \frac{k(mk+1)}{2}+4k-mk-2\sum_{j=1}^k \alpha_j + |S'| \cdot \frac{k(mk+1)}{2}.
\end{align*}
From $|\widetilde{S}|=mk$, we know $mk+|S'|=2|S|$, so we can rewrite it as follows: 
$$\sum_{s \in S} s \leq |S| \cdot \frac{k(mk+1)}{2}+2k-\frac{mk}{2}-\sum_{j=1}^k \alpha_j.$$ 
Since the average of $s_i$'s is $k(mk+1)/2$, we have 
\begin{align}\label{ccc}R \geq \frac{mk}{2}+\sum_{j=1}^k \alpha_j -2k,\end{align}
where we let $R=\left|\left\{s_i : s_i=k(mk+1)/2+1\right\}\right|$. 

Let $C=(\epsilon_1,\ldots,\epsilon_{mk})$ be the sequence 
defined by $\epsilon_i=$``$<$'' if $s_i<s_{i+1}$ or $\epsilon_i=$``$>$'' if $s_i>s_{i+1}$. 
Since $s_{i-1}<s_i>s_{i+1}$ holds if $s_i=k(mk+1)/2+1$, we see that $2R \leq f(C)$, 
where $f(X)$ is the function defined in Lemma \ref{technical} below. 
Hence, it follows from Lemma \ref{technical} that $2R \leq f(C) \leq (k-1)m+2\min\{ \alpha_1,\ldots,\alpha_k\}$. Thus, 
\begin{align*}
R \leq \frac{(k-1)m}{2}+\min\{ \alpha_1,\ldots,\alpha_k\}. 
\end{align*}

Let $\alpha_\lambda=\min\{ \alpha_1,\ldots,\alpha_k\}$. Then we see the following: 
\begin{align*}
\frac{(k-1)m}{2}+\alpha_\lambda \geq 
R &\geq \frac{(k-1)m}{2}+\alpha_\lambda+\left(\frac{m}{2}+\sum_{1 \leq j \leq k, j \neq \lambda}\alpha_j-2k\right) \quad \text{(by \eqref{ccc})}\\
&\geq \frac{(k-1)m}{2}+\alpha_\lambda+\left(\frac{m}{2}+(k-1)-2k\right) \\
&= \frac{(k-1)m}{2}+\alpha_\lambda+\frac{m}{2}-k-1. 
\end{align*}
In particular, $0 \geq m/2-k-1$. However, by our assumption, we have $m \geq 2k+3$, a contradiction.

Therefore, $\msum(mk,k) \geq 2$, as desired. \end{proof}
\begin{lem}\label{technical}
For $i=1,\ldots,k$, let $A_i = (a_{i,1} , a_{i,2} , \ldots , a_{i,m})$ be a sequence of two symbols 
and let $$P = ( a_{1,1} , a_{2,1} , \ldots , a_{k,1} , a_{1,2} , a_{2,2} , \ldots , a_{k,2} , \ldots , a_{1,m} , a_{2,m} , \ldots , a_{k,m})$$ 
be a nested sequence. For a given sequence $X$ of two symbols, let $f(X)$ be the function counting the number of runs of $X$, 
which can be also understood as $f(X)=|\{ i : x_i \neq x_{i+1}\}|$ for $X=(x_1,\ldots,x_M)$, where $x_{M+1}=x_1$. 

Assume that $k$ is odd. Then we have the inequality $$f(P) \leq (k-1)m + \min \{ f(A_i) \}_{1 \leq i \leq k}.$$ 
\end{lem}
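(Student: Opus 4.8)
The plan is to read $P$ as a cyclic word of length $mk$ and to count its changes by grouping the $mk$ cyclic adjacencies into $m$ blocks anchored at a single, freely chosen row. Fix an arbitrary $\lambda \in \{1,\ldots,k\}$. The entry $a_{\lambda,j}$ occupies position $(j-1)k+\lambda$ of $P$, so the next occurrence of row $\lambda$, namely $a_{\lambda,j+1}$, sits exactly $k$ positions later (columns read cyclically, with $a_{\lambda,m+1}=a_{\lambda,1}$). First I would partition the $mk$ adjacencies of $P$ into the $m$ blocks $B_1,\ldots,B_m$, where $B_j$ runs from $a_{\lambda,j}$ to $a_{\lambda,j+1}$. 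Each $B_j$ consists of exactly $k$ adjacencies, and the blocks are pairwise disjoint and cover all $mk$ adjacencies; hence $f(P)=\sum_{j=1}^m g_j$, where $g_j$ denotes the number of changes occurring inside $B_j$.

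Next I would exploit a parity observation. Since $g_j$ counts the changes along a stretch of $k$ adjacencies joining $a_{\lambda,j}$ to $a_{\lambda,j+1}$, its parity records whether the two endpoints differ: writing $r_j=1$ if $a_{\lambda,j}\neq a_{\lambda,j+1}$ and $r_j=0$ otherwise, we have $g_j\equiv r_j \pmod 2$. This is where the hypothesis that $k$ is odd enters. A block has only $k$ adjacencies, so $g_j\leq k$; but when $r_j=0$ the number $g_j$ must be even, and as $k$ is odd the largest even value it can attain is $k-1$. These two cases combine into the single inequality $g_j\leq (k-1)+r_j$, valid for every $j$.

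Finally, summing over $j$ yields $f(P)=\sum_{j=1}^m g_j\leq m(k-1)+\sum_{j=1}^m r_j=m(k-1)+f(A_\lambda)$, where the last equality is just the definition of $f(A_\lambda)$. Since $\lambda\in\{1,\ldots,k\}$ was arbitrary, I would take the minimum over $\lambda$ to conclude $f(P)\leq (k-1)m+\min_{1\leq i\leq k} f(A_i)$, as required.

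The only delicate point, and the step I expect to be the crux, is the bookkeeping that exhibits $P$ as a disjoint union of $m$ blocks of $k$ adjacencies each, anchored at whichever row one pleases; once this partition is in place, the parity of $g_j$ together with the oddness of $k$ does all the remaining work. It is precisely the oddness of $k$ that forces the saving of $1$ in the case $r_j=0$, and hence produces the coefficient $(k-1)$ rather than the trivial $k$.
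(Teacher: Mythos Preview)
Your proof is correct and follows essentially the same approach as the paper: both fix the row achieving the minimum, partition the $mk$ cyclic adjacencies of $P$ into $m$ blocks of $k$ adjacencies each running between consecutive entries of that row, and then use the parity constraint together with the oddness of $k$ to save one change in every block where the endpoints agree. The paper packages this as an induction on partial sequences $P_\ell$, whereas you write the decomposition $f(P)=\sum_j g_j$ directly and bound each $g_j\le (k-1)+r_j$, but the content is identical.
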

\begin{proof}
Let $r = \min \{ f(A_i) \}_{1 \leq i \leq k}$ and let $f(A_j) = r$. Then we have 
$$|\{ ( a_{j,i} , a_{j,i+1} ) : a_{j,i} \neq a_{j,i+1} \}| = r \;\text{ and }\;
|\{ ( a_{j,i} , a_{j,i+1} ) : a_{j,i} = a_{j,i+1} \}| = m-r$$ 
for $1 \leq i \leq m$, where we let $a_{j,m+1}=a_{j,1}$.

For each $1 \leq \ell \leq m$, let 
$$P_\ell = ( a_{j,1}, a_{j+1,1} , \ldots , a_{k,1},a_{1,2},\ldots, a_{j,2} , a_{j+1,2} , \ldots , a_{j,3} , \ldots, a_{j-1,\ell+1}).$$ 
Remark that $P_m=P$. Since $k$ is odd, for each $\ell=1,\ldots,m-1$, we see the following: 
\begin{itemize}
\item If $a_{j,\ell} \neq a_{j,\ell+1}$, then $f(P_{\ell+1}) \leq f(P_\ell) + k$; 
\item If $a_{j,\ell} = a_{j,\ell+1}$, then $f(P_{\ell+1}) \leq f(P_\ell) + k - 1$. 
\end{itemize}
Therefore, $$f(P)=f(P_m) \leq kr + (k-1)(m-r) = (k-1)m+r.$$ 
\end{proof}

\bigskip


\section{Proof of Theorem \ref{main2}}\label{nkeven}
Let $n$ and $k$ be even. In this section, we prove Theorem \ref{main2}. 
Since $\msum(n,k) \geq 1$ always holds by \eqref{chuui}, it is enough to show the existence of $\pi \in S_n$ with $\msum(\pi,k)=1$. 
The remaining part of this section is devoted to constructing such $\pi$. 

Let $n=qk+r$ for some $q,r \in \ZZ$, where $q >0$ and $0 \leq r \leq k-1$. 
In the following, we will construct a configuration of numbers from $1$ to $n$ by putting each number to one of $n$ boxes. 
\begin{itemize}
\item First, we prepare $(q+1) \times k$ boxes and remove $(k-r)$ boxes from the most upper right box. 
\item Let $(a,b)$ denote the box placed in the $a$th row (from the upper) and the $b$th (from the left) one, 
where we let $(a+q+1,b)=(a,b)$ and $(a,b+k)=(a,b)$. 
\item Next, we put the numbers $1,2,\ldots,n/2$ by the following manner: 
\begin{itemize}
\item Put $1$ at $(1,2)$. 
\item Let $t>1$ and assume that $(t-1)$ is at $(a,b)$. Then we put $t$ at 
\begin{itemize}
\item $(a+1,b)$ if $a < q+1$; 
\item $(1,b+r)$ if $a=q+1$ and $1 \leq b+r \leq k-r$ when $(1,b+r)$ is empty, or $(1,b+r+2)$ when some number is already put at $(1,b+r)$; 
\item $(2,b+r)$ if $a=q+1$ and $k-r+1 \leq b+r \leq k$ when $(2,b+r)$ is empty, or $(2,b+r+2)$ when some number is already put at $(2,b+r)$. 
\end{itemize}
\end{itemize}
\item Finally, we put the numbers $n/2+1,\ldots,n$ by the following manner: 
\begin{itemize}
\item Put $n$ at $(1,1)$. 
\item Let $t<n$ and assume that $(t+1)$ is at $(a,b)$. Then we put $t$ at 
\begin{itemize}
\item $(a+1,b)$ if $a < q+1$; 
\item $(1,b+r)$ if $a=q+1$ and $1 \leq b+r \leq k-r$ when $(1,b+r)$ is empty, or $(1,b+r+2)$ when some number is already put at $(1,b+r)$; 
\item $(2,b+r)$ if $a=q+1$ and $k-r+1 \leq b+r \leq k$ when $(2,b+r)$ is empty, or $(2,b+r+2)$ when some number is already put at $(2,b+r)$. 
\end{itemize}
\end{itemize}
\end{itemize}
For example, in the case $n=48$ and $k=18$, the configuration looks as follows: 
\begin{table}[htbp]
  \begin{center}
    \begin{tabular}{|c|c|c|c|c|c|c|c|c|c|c|c|c|c|c|c|c|c|}
      \hline
      48 &  1 & 40 &  9 & 32 & 17 & 43 &  6 & 35 & 14 & 27 & 22 &$*$ &$*$ &$*$ &$*$ &$*$ &$*$ \\ \hline
      47 &  2 & 39 & 10 & 31 & 18 & 42 &  7 & 34 & 15 & 26 & 23 & 45 &  4 & 37  & 12 & 29  & 20   \\ \hline
      46 &  3 & 38 & 11 & 30 & 19 & 41 &  8 & 33 & 16 & 25 & 24 & 44 &  5 & 36  & 13 & 28  & 21   \\
      \hline
    \end{tabular}
  \end{center}
\end{table}

From this configuration, we define $\pi=(\pi_1,\ldots,\pi_n) \in S_n$ by $\pi_{ik+j}=(q+1-i,j)$ for $0 \leq i \leq q$ and $1 \leq j \leq k$. 
For the above example, $\pi$ looks like $$(\underbrace{46,3,38,11,\ldots,21}_{3\text{rd row}},
\underbrace{47,2,39,10,\ldots,20}_{2\text{nd row}},\underbrace{48,1,\ldots,22}_{1\text{st row}}).$$

Then we see that if $1 \leq j \leq k$ is odd, then $\displaystyle s_{ik+j}=k/2 \cdot (n+1)$. 
Moreover, we also see that if $j$ is even and $1 \leq i \leq q-2$, or $j$ is even with $j \leq r$ and $i=q-1$, 
then $\displaystyle s_{ik+j}=k(n+1)/2 +1$. 

Let us consider the case $i=q-1$ and $j$ is even with $r+2 \leq j \leq k$. Then 
\begin{align*}
s_{ik+j}=s_{(q-1)k+j-1}-\pi_{(q-1)k+j-1}+\pi_{(q-1)k+j-1+k} = s_{(q-1)k+j-1}-\pi_{(q-1)k+j-1}+\pi_{j-r-1}. 
\end{align*}
From the construction, we have that $\pi_{(q-1)k+j-1}$ is the number put in $(2,j-1)$ and 
$\pi_{j-r-1}$ is the number put in $(q+1,j-r-1)$. By the procedure of putting the numbers, 
we see that the number in $(q+1,j-r-1)$ is equal to the one in $(2,j-1)$ plus $1$ or less than the one in $(2,j-1)$. 
Hence we obtain that $\pi_{j-r-1}-\pi_{(q-1)k+j-1} \leq 1$. Therefore, one has $s_{ik+j} \leq s_{ik+j-1}+1=k(n+1)/2+1$. 

Let us consider the case $i=q$ and $j$ is even with $2 \leq j \leq r$. Then 
\begin{align*}
s_{ik+j}=s_{qk+j-1}-\pi_{qk+j-1}+\pi_{qk+j-1+k} = s_{qk+j-1}-\pi_{qk+j-1}+\pi_{k+j-r-1}. 
\end{align*}
From the construction, we have that $\pi_{qk+j-1}$ is the number put in $(1,k+j-1)$ and 
$\pi_{k+j-r-1}$ is the number put in $(q+1,k+j-r-1)$. Since the procedure of putting the numbers, 
we see that the number in $(q+1,k+j-r-1)$ is equal to the one in $(1,k+j-1)$ plus $1$ or less than the one in $(1,k+j-1)$. 
Hence we obtain that $\pi_{k+j-r-1}-\pi_{qk+j-1} \leq 1$. Therefore, one has $s_{ik+j} \leq s_{ik+j-1}+1=k(n+1)/2+1$, 
as required. 

\begin{flushright}$\square$\end{flushright}

\bigskip


\section{Proofs of Theorem \ref{main3} and Proposition \ref{main4}}\label{k=5}

This section is devoted to giving a proof of Theorem \ref{main3} and a proof of Proposition \ref{main4}. 

\bigskip
\begin{proof}[Proof of Theorem \ref{main3}]
We will prove that $\msum((2a+1)m+a+1,2a+1) > 1$ for any positive integers $a$ and $m$ with $a \geq 2$. . 

Let $n=(2a+1)m+a+1$. Suppose that $\msum(n,2a+1) \leq 1$. Then there is $\pi \in S_n$ with $\msum(\pi,2a+1) \leq 1$. 
Let $\pi_{a+1} = 1$. Since 
\begin{align*}
\sum_{i=1}^n \pi_i + \sum_{i=1}^a \pi_i &= \sum_{j=0}^{m}s_{1+j(2a+1)} \leq (m+1) \cdot \left(\frac{(2a+1)(n+1)}{2}+1\right), 
\end{align*}
we have $$\sum_{i=1}^a \pi_i \leq (m+1) \cdot \left(\frac{(2a+1)(n+1)}{2}+1\right) - \frac{n(n+1)}{2}.$$
Hence, 
\begin{align*}
\sum_{j=0}^{m-1}s_{a+2+j(2a+1)} = \frac{n(n+1)}{2} - \sum_{i=1}^{a+1}\pi_i 
&\geq \frac{n(n+1)}{2} - \left ( (m+1) \cdot \left(\frac{(2a+1)(n+1)}{2}+1\right) - \frac{n(n+1)}{2} +1 \right ) \\
&= n(n+1) - (m+1) \cdot \left(\frac{(2a+1)(n+1)}{2}+1\right) - 1. 
\end{align*}
However, by our assumption, we see that 
\begin{align*}
\sum_{j=0}^{m-1}s_{a+2+j(2a+1)} \leq m \cdot \left(\frac{(2a+1)(n+1)}{2}+1\right). 
\end{align*}
Therefore, 
\begin{align*}
n(n+1) - (m+1) \cdot \left(\frac{(2a+1)(n+1)}{2}+1\right) - 1 \leq m \cdot \left(\frac{(2a+1)(n+1)}{2}+1\right) 
\; \Longrightarrow \; a-2 \leq (3-2a)m
\end{align*}
Since $a \geq 2$, we have $m \leq \displaystyle \frac{a-2}{3-2a} \leq 0$, a contradiction. 
\end{proof}

\bigskip
\begin{proof}[Proof of Proposition \ref{main4}]
We divide into two cases: $n$ is even and $n$ is odd. Namely, we consider the case $n \equiv 2$ (mod $10$) and the case $n \equiv 7$ (mod $10$). 

\bigskip

First, we prove that $\msum(10m+2,5) \geq 3/2$ for any positive integer $m$. 

Suppose that $\msum(10m+2,5) < 3/2$, i.e., $\msum(10m+2,5)=1/2$ by \eqref{chuui}. 
Then there is $\pi \in S_{10m+2}$ with $\msum(\pi,5) = 1/2$. For each $i=1,\ldots,10m+2$ and $j=0,1,\ldots,2m$, we have 
\begin{align*}
\sum_{\ell=1}^{10m+2}\pi_\ell - \pi_i - \pi_{i+5j+1} = \sum_{h=0}^{j-1}s_{i+1+5h} + \sum_{h=j}^{2m-1}s_{i+2+5h} \leq 2m(25m+8). 
\end{align*}
Hence, we obtain that $$9m+3 \leq \pi_i + \pi_{i+5j+1}.$$
Let $\pi_1 = 1$. Then we have $\pi_{5j+2} \geq 9m+2$ for each $j=0,1,\ldots,2m$, thus we see that 
\begin{align*}
\{\pi_{5j+2} : j=0, 1,\ldots, 2m\} \subset \{9m+2, 9m+3, \ldots, 10m+2\}. 
\end{align*}
However, since $m > 0$, we see that 
\begin{align*}
2m+1=|\{\pi_{5j+2} : j=0,1, \ldots, 2m\}| > |\{9m+2, 9m+3, \ldots, 10m+2\}|=m+1, 
\end{align*}
a contradiction.

\bigskip

Next, we prove that $\msum(10m+7,5) \geq 2$ for any positive integer $m$.

Suppose that $\msum(10m+7,5)=1$. Then there is $\pi \in S_{10m+7}$ with $\msum(\pi,5) = 1$. 
For each $i=1,\ldots,10m+7$ and $j=0,1,\ldots,2m+1$, we have 
\begin{align*}
\sum_{\ell=1}^{10m+7}\pi_\ell - \pi_i - \pi_{i+5j+1} = \sum_{h=0}^{j-1}s_{i+1+5h} + \sum_{h=j}^{2m}s_{i+2+5h} \leq (2m+1)(5(5m+4)+1). 
\end{align*}
Hence, we obtain that $$8m+7 \leq \pi_i + \pi_{i+5j+1}.$$
Let $\pi_1 = 1$. Then we have $\pi_{5j+2} \geq 8m+6$ for each $j$. Thus, we see that 
\begin{align}\label{k1}
\{\pi_{5j+2} : j=0, 1, \ldots, 2m+1\} = \{8m+6, 8m+7, \ldots, 10m+7\}. 
\end{align}
Moreover, since $\pi_3 + \pi_4 \geq 8m+7$, we also see that 
\begin{align*}
5(5m+4)+1 \geq \pi_{10m+7} + \pi_1 + \pi_2 + \pi_3 + \pi_4 \geq  \pi_{10m+7} + 1 + \pi_2 + 8m+7. 
\end{align*}
Hence, \begin{align}\label{k2} \pi_{10m+7} + \pi_2 \leq 17m+13. \end{align}
By using our assumption, we have 
\begin{equation*}
\begin{array}{ccccccc}
5(5m+4)+1 &\geq& s_3 &=& s_2 &+& (\pi_7 - \pi_2), \\
5(5m+4)+1 &\geq& s_8 &=& s_7 &+& (\pi_{12} - \pi_7), \\
&&& \vdots &&& \\
5(5m+4)+1 &\geq& s_{10m+3} &=& s_{10m+2} &+& (\pi_{10m+7} - \pi_{10m+2}), 
\end{array}
\end{equation*}
and 
\begin{equation}\label{k3}
\begin{array}{ccccccc}
5(5m+4)+1 &\geq& s_2 &=& s_3 &-& (\pi_7 - \pi_2), \\
5(5m+4)+1 &\geq& s_7 &=& s_8 &-& (\pi_{12} - \pi_7), \\
&&& \vdots &&& \\
5(5m+4)+1 &\geq& s_{10m+2} &=& s_{10m+3} &-& (\pi_{10m+7} - \pi_{10m+2}). 
\end{array}
\end{equation}
Since $\pi_{5j+7}-\pi_{5j+2}$ is either positive or negative for each $j=0,1,\ldots,2m$, we see that 
either 
\begin{align*}
&|\{j \in \{0,1,\ldots,2m\} : s_{5j+2} \leq 5(5m+4) \}| \geq m+1\;\text{ or }\\
&|\{j \in \{0,1,\ldots,2m\} : s_{5j+3} \leq 5(5m+4) \}| \geq m+1
\end{align*} holds. 
In the former case, we see that 
\begin{align*}
(5m+4)(10m+7) &= \pi_1 + \sum_{j=0}^{2m} s_{5j+2} + \pi_{10m+7} \leq 1 + m \cdot (5(5m+4)+1) + (m+1) \cdot 5(5m+4) + \pi_{10m+7}. 
\end{align*}
Hence, $\pi_{10m+7} \geq 9m+7$. By \eqref{k1} and \eqref{k2}, we obtain that $\pi_2 = 8m+6$ and $\pi_{10m+7} = 9m+7$. 
From $\pi_2 = 8m+6$, we see that $$\sum_{j=0}^{2m}s_{5j+3}=(5m+4)(10m+7)-(\pi_1+\pi_2) = (2m+1)(5(5m+4)+1).$$
From $s_{5j+3} \leq 5(5m+4)+1$ by our assumption, we have $s_3 = s_8 = \cdots = s_{10m+3} = 5(5m+4)+1$. It follows from \eqref{k3} that 
$\pi_7-\pi_2 \geq 0, \pi_{12}-\pi_7 \geq 0, \ldots, \pi_{10m+7} - \pi_{10m+2} \geq 0$. 
Hence, $$8m+6=\pi_2 \leq \pi_7 \leq \cdots \leq \pi_{10m+7} = 9m+7,$$
a contradiction. 

Similarly, we can lead a contradiction in the latter case. 
\end{proof}

\bigskip
\bigskip

\end{document}